\DeclareMathAlphabet{\mathpzc}{OT1}{pzc}{m}{it}
\newtheorem{theorem}{Theorem}[section]
\newtheorem{proposition}[theorem]{Proposition}
\newtheorem{lemma}[theorem]{Lemma}
\theoremstyle{definition}
\newtheorem{definition}[theorem]{Definition}
\theoremstyle{remark}
\newtheorem{remark}[theorem]{Remark}
\def\varle{\leqslant}
\newcommand{\CI}{{\mathcal I}}
\newcommand{\CJ}{{\mathcal J}}
\newcommand{\CM}{{\mathcal M}}
\newcommand{\CO}{{\mathcal O}}
\newcommand{\CW}{{\mathcal W}}
\newcommand{\CZ}{{\mathcal Z}}
\newcommand{\fh}{{{\mathfrak h}}}
\newcommand{\fg}{{{\mathfrak g}}} 
\newcommand{\fb}{{{\mathfrak b}}}
\newcommand{\fhd}{\fh^\star}
\newcommand{\hCW}{{\widehat\CW}}
\newcommand{\hCO}{{\widehat\CO}}
\newcommand{\hfh}{{\widehat\fh}}
\newcommand{\hfg}{{\widehat\fg}}
\newcommand{\hfb}{{\widehat\fb}}
\newcommand{\hR}{{\widehat R}}
\newcommand{\hfhd}{\widehat{\fh}^\star}
\newcommand{\DC}{{\mathbb C}}
\newcommand{\DZ}{{\mathbb Z}}
\newcommand{\DN}{{\mathbb N}}
\newcommand{\End}{{\operatorname{End}}}
\newcommand{\Ext}{{\operatorname{Ext}}}
\newcommand{\Hom}{{\operatorname{Hom}}}
\DeclareMathOperator{\cha}{\mathrm{ch}}
\newcommand{\rk}{{{\operatorname{rk}}}}
\newcommand{\ol}{\overline}
\newcommand{\id}{{\operatorname{id}}}
\newcommand{\res}{{\operatorname{res}}}
\newcommand{\re}{{\operatorname{re}}}
\newcommand{\crit}{{\operatorname{crit}}}
\newcommand{\inj}{{\hookrightarrow}}
\newcommand{\sur}{\mbox{$\to\!\!\!\!\!\to$}}
\newcommand{\GL}{{\operatorname{GL}}}
\newcommand{\rCO}{{\ol\CO}}
\newcommand{\rP}{{\ol P}}
\newcommand{\rDelta}{{\ol\Delta}}
\newcommand{\rsim}{\operatorname{\ol\sim}}
\newcommand{\imag}{\mathrm{im}}
\newcommand{\real}{\mathrm{re}}
\newcommand{\comment}[1]{}
\begin{document}

\pagenumbering{arabic}
\title[]{On the subgeneric restricted blocks of affine category $\CO$ at the critical level}
\author[]{Peter Fiebig}
\begin{abstract} We determine the endomorphism algebra of a projective generator in a subgeneric restricted block of the critical level category $\CO$ over an affine Kac--Moody algebra. 
\end{abstract}

\maketitle
\section{Introduction}

This article complements the results of the articles \cite{AF08} and \cite{AF09}. There we studied the structure of restricted critical level representations for affine Kac--Moody algebras. The two main results we obtained are the following. The first is a multiplicity formula for restricted Verma modules with a {\em subgeneric} critical highest weight, and the second is a linkage principle together with a block decomposition for the restricted category $\CO$. In this article we use these results in order to describe the categorical structure of the subgeneric restricted blocks of $\CO$.

We would like to be able to describe the structure of  all restricted blocks and to establish more general multiplicity and character formulas. Generically, a restricted critical level block contains a unique simple object which is, moreover, projective. This implies that such a block is equivalent to the category of $\DC$-vector spaces. The next simplest situation is already much more involved. The {\em subgeneric} blocks contain infinitely many simples. Every subgeneric restricted Verma module has a two-step Jordan--H\"older filtration, and the restricted version of BGGH-reciprocity (see \cite{AF09}) tells us that a restricted subgeneric indecomposable projective object is a non-split extension of two Verma modules. In this note we describe the endomorphism algebra of a projective  generator in such a subgeneric block.

\section{Affine Kac--Moody algebras}
 In this section we collect the main structural results on affine Kac-Moody algebras. 
Let $\fg$ be a simple complex Lie algebra and let $\hfg$ be the corresponding affine Kac--Moody algebra. As a vector space, $\hfg=\fg\otimes_\DC\DC[t,t^{-1}]\oplus \DC K\oplus\DC D$, and the Lie bracket on $\hfg$ is determined by the rules
\begin{align*}
[x\otimes t^m, y\otimes t^n] &= [x,y]\otimes t^{m+n}+m\delta_{m,-n} (x,y)K,\\
[K,\hfg]&=\{0\},\\
[D,x\otimes t^n]&=nx\otimes t^n,
\end{align*}
where $x$ and $y$ are elements of  $\fg$, $m$ and $n$ are integers, $\delta_{a,b}$ is the Kronecker symbol, and $(\cdot,\cdot)\colon \fg\times\fg\to\DC$ denotes the Killing form on $\fg$.

Let $\fh\subset\fg$ be a Cartan subalgebra and $\fb\subset \fg$   a Borel subalgebra containing $\fh$. Then
\begin{align*}
\hfh&:=\fh\oplus\DC K\oplus\DC D,\\
\hfb&:=\fg\otimes t\DC[t]\oplus\fb\oplus\DC K\oplus\DC D
\end{align*}
denote the corresponding affine Cartan  and Borel subalgebras of $\hfg$, respectively.

\subsection{Affine roots}
We denote by $V^\star$ the dual of a vector space $V$. Let $R \subset \fhd$ be the set of roots of $\fg$ with respect to $\fh$. We consider $\fhd$ as a subspace in $\hfhd$ by letting each $\lambda\in\fhd$ act trivially on $\DC K\oplus\DC D$. 
We define  $\delta\in \hfhd$ by 
\begin{align*}
\delta(\fh\oplus \DC K) & = \{0\}, \\ 
\delta(D) & = 1.
\end{align*}
The set $\hR\subset \hfhd$ of roots of $\hfg$ with respect to $\hfh$ is 
$$
\hR=\{\alpha+n\delta \mid \alpha\in R,n\in\DZ\}\cup\{n\delta\mid n\in \DZ, n\ne 0\}.
$$
The subsets 
\begin{align*}
\hR^{\real} &:=\{\alpha+n\delta \mid \alpha\in R, n\in\DZ\},\\
\hR^{\imag} &:=  \{n\delta\mid n\in \DZ, n\ne 0\}
\end{align*}
are called the sets of {\em real} roots and of {\em imaginary} roots,
resp.

We denote by $ R^+\subset  R$ the positive (finite) roots, i.e.~the set of roots of $\fb$ with respect
to $\fh$. Then the set of positive affine roots, i.e.~the set of roots of $\hfb$ with respect to $\hfh$, is 
$$
\hR^{+}:=\{\alpha+n\delta\mid \alpha\in R, n\ge 1\}\cup
 R^{+}\cup \{n\delta\mid n\ge 1\}.
$$

The partial order ``$\le$'' on $\hfhd$ is defined as follows. We have $\lambda\le\mu$ if $\mu-\lambda$ is a sum of positive affine roots. 

\subsection{The invariant bilinear form}
There is an extension of the  Killing form $(\cdot,\cdot)$ on $\fg$ to a symmetric bilinear form on $\hfg$. It is determined by the following: 
\begin{align*}
(x\otimes t^n, y\otimes t^m) &= \delta_{n,-m} (x,y), \\
(K, \fg\otimes_\DC \DC[t,t^{-1}]\oplus \DC K) &= \{0\}, \\
(D, \fg\otimes_\DC \DC[t,t^{-1}]\oplus \DC D) &= \{0\}, \\
(K,D) &= 1
\end{align*}
for $x,y\in \fg$ and $m,n\in\DZ$. This form  is again non-degenerate and {\em invariant}, i.e.~
it satisfies   $([x,y],z)=(x,[y,z])$ for all
$x,y,z\in\hfg$. Moreover, it induces a non-degenerate bilinear form on the Cartan
subalgebra $\hfh$ and hence yields an isomorphism $\hfh\stackrel{\sim}\to\hfhd$.
 We get an induced symmetric
non-degenerate bilinear form on the  dual $\hfhd$, which we again denote by the symbol $(\cdot,\cdot)$. 

\begin{remark}\label{rem-deltaK} The definitions immediately imply that the isomorphism $\hfh\to\hfhd$ from above  maps $K$ to $\delta$, i.e.~for any $\lambda\in\hfhd$ we have
$$
\lambda(K)=(\delta,\lambda).
$$
In particular, $(\delta,\gamma)=0$ for any $\gamma\in\hR$.
\end{remark}

\subsection{The Weyl group}  

For each real affine root  $\alpha+n\delta$ we have $(\alpha+n\delta,\alpha+n\delta)=(\alpha,\alpha)\ne 0$, hence we can define the reflection 
\begin{align*}
s_{\alpha,n}\colon \hfhd&\to\hfhd\\
\lambda&\mapsto \lambda-2\frac{(\lambda,\alpha+n\delta)}{(\alpha,\alpha)}(\alpha+n\delta).
\end{align*}
This is  a reflection as it stablizes the hyperplane $(\cdot,\alpha+n\delta)=0$ and maps $\alpha+n\delta$ to its inverse.

We denote by $\hCW\subset \GL(\hfhd)$ the affine Weyl group, i.e.~the
subgroup generated by the reflections $s_{\alpha,n}$ for
$\alpha\in R$ and $n\in\DZ$. The subgroup $\CW\subset\hCW$ generated by the
reflections $s_{\alpha,0}$ with $\alpha\in R$  leaves the subset
$\fhd\subset \hfhd$ stable and can be identified with the Weyl
group of $\fg$. 

 Let $\rho\in\hfhd$ be an element that takes the value 1 on any simple affine coroot. This element is defined only up to addition of a multiple of $\delta$. Nevertheless, nothing in what follows will depend on the value of $\rho$ on $D$ in an essential way.

 The {\em dot-action} $\hCW\times\hfhd\to\hfhd$,
  $(w,\lambda)\mapsto w.\lambda$, of the affine Weyl group on
  $\hfhd$ is obtained by shifting the linear action in such a way that
  $-\rho$ becomes a fixed point, i.e.~ it is given by
$$
w.\lambda:=w(\lambda+\rho)-\rho
$$
for $w\in\hCW$ and $\lambda\in \hfhd$. Note that since
$(\delta,\alpha+n\delta)=0$ we have 
$s_{\alpha,n}(\delta)=\delta$ for all
$\alpha+n\delta\in \hR^{\real}$. Hence 
$w(\delta)=\delta$ for all $w\in\hCW$ (so the dot-action is
independent of the choice of $\rho$).

\section{The affine category $\CO$}\label{sec-affO}
We denote by $\hCO$ the full subcategory of the category of representations of $\hfg$ that contains an object $M$ if and only if it has the following properties:
\begin{itemize}
\item $M$ is semisimple under the action of $\hfh$,
\item $M$ is locally finite under the action of $\hfb$.
\end{itemize}
The first condition means that $M=\bigoplus_{\lambda\in\hfhd} M_\lambda$, where $M_\lambda=\{m\in M\mid h.m=\lambda(h)m\text{ for all $h\in\hfh$}\}$, and the second that each $m\in M$ is contained in a finite-dimensional sub-$\hfb$-module of $M$. 

For any $\lambda\in\hfhd$ we denote by $\Delta(\lambda)$ the Verma module with highest weight $\lambda$, and by $L(\lambda)$ its unique irreducible quotient. The $L(\lambda)$ for $\lambda\in\hfhd$ are a system of representatives of the simple objects in the category $\hCO$. 

For an object $M$ of $\hCO$ and a simple object $L$ in $\hCO$ we denote by $[M:L]\in\DN$ the corresponding Jordan--H\"older multiplicity, whenever this makes sense (see \cite{DGK}). In general, we write $[M:L]\ne 0$ if $L$ is isomorphic to a subquotient (i.e.~a quotient of a subobject) of $M$. 

\subsection{Projective objects in $\hCO$}
In order to describe the categorical structure of $\hCO$ we want to describe the endomorphism algebra of a projective generator. Now $\hCO$ does not contain enough projectives. Fortunately, it is filtered by ``truncated subcategories'' that do contain enough projectives, which for us is good enough. 

In order to define the truncated subcategories, we need the following topology on $\hfhd$.
\begin{definition} A subset $\CJ$ of $\hfhd$ is called {\em open} if it is  downwardly closed with respect to the partial order ``$\le$'', i.e.~if it satisfies the following condition: If $\lambda\in \CJ$ and $\mu<\lambda$, then $\mu\in \CJ$.
An open subset $\CJ$ of $\hfhd$ is called {\em bounded} (rather, {\em locally bounded from above}), if for any $\lambda\in\CJ$, the set $\{\nu\in\CJ\mid \nu>\lambda\}$ is finite.
\end{definition} 
 Now we can define the truncated subcategories.
\begin{definition} Let $\CJ\subset \hfhd$ be open. Then $\hCO^\CJ$ is the full subcategory of $\hCO$ that contains all objects $M$
with the property that $M_\lambda\ne\{0\}$ implies $\lambda\in\CJ$.
\end{definition}
For any $\lambda\in\hfhd$ the set $\{\mu\in\hfhd\mid\mu\le\lambda\}$ is open. We use the notation $\hCO^{\le\lambda}$ instead of $\hCO^{\{\mu\in\hfhd\mid\mu\le\lambda\}}$. Note that $L(\lambda)$ is contained in $\hCO^{\CJ}$ if and only if $\lambda\in\CJ$. The inclusion functor $\hCO^\CJ\to\hCO$ has a left adjoint that we denote by $M\mapsto M^\CJ$. It is defined as follows: Let $\CI=\hfhd\setminus\CJ$ be the closed complement of $\CJ$ and let $M_\CI\subset M$ be the submodule generated by all weight spaces $M_\lambda$ with $\lambda\in\CI$. Then set
$$
M^\CJ:=M/M_\CI.
$$
This definition clearly is functorial.
We will need the following notion.
\begin{definition}
Let $M\in\hCO$. We say that $M$ {\em admits a Verma flag} if there is a finite filtration
$$
0=M_0\subset M_1\subset\dots \subset M_n=M
$$
with $M_i/M_{i-1}\cong \Delta(\mu_i)$ for some $\mu_1,\dots,\mu_n\in\hfhd$.
\end{definition}

In case $M$ admits a Verma flag, the {\em Verma multiplicity} $(M:\Delta(\mu_i))=\#\{i\in\{1,\dots,n\}\mid \mu_i=\mu\}$ is independent of the chosen filtration. The following is proven in \cite{RCW} (see also \cite{F12}).
\begin{theorem}\label{thm-BGG} Let $\CJ\subset \fhd$ be open and bounded and let $\lambda\in\CJ$.
\begin{enumerate}
\item There exists a projective cover $P^\CJ(\lambda)\to L(\lambda)$ in $\hCO^\CJ$ and the object $P^\CJ(\lambda)$ admits a Verma flag.
\item BGGH-reciprocity 
$$
(P^\CJ(\lambda):\Delta(\mu))=\begin{cases}
[\Delta(\mu):L(\lambda)], &\text{ if $\mu\in\CJ$},\\
0, &\text{ if $\mu\not\in\CJ$}
\end{cases}
$$
holds for the Jordan-H\"older and Verma multiplicities.
\item If $\CJ^\prime\subset\CJ$ is an open subset, then $P^\CJ(\lambda)^{\CJ^\prime}\cong P^{\CJ^\prime}(\lambda)$.
\item For any $M\in\hCO^\CJ$ such that $[M:L(\lambda)]$ is finite we have
$$
\dim_\DC\Hom_{\hCO}(P^\CJ(\lambda), M)=[M:L(\lambda)].
$$
\end{enumerate}
\end{theorem}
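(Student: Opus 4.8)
The plan is to follow the classical approach of \cite{RCW} (see also \cite{F12}) and realise $\hCO^{\CJ}$ as a highest weight category with standard objects the Verma modules $\Delta(\mu)$, $\mu\in\CJ$; parts (1)--(4) are then its four basic properties. Passing to $\hCO^{\CJ}$ is forced on us because $\hCO$ itself lacks enough projectives, and the boundedness of $\CJ$ is exactly what makes the truncation work: for $\lambda\in\CJ$ the set $\CJ_{\geq\lambda}:=\{\nu\in\CJ\mid\nu\geq\lambda\}$ is finite, so all posets occurring below are ``upper-finite''. The one non-formal input I would establish first is a \emph{local projectivity} statement: if $\mu$ is maximal in $\CJ$, then $\Delta(\mu)$ lies in $\hCO^{\CJ}$ (its weights are $\leq\mu$ and $\CJ$ is open) and is projective there. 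Indeed, for $M\in\hCO^{\CJ}$ every positive affine root vector kills $M_\mu$ --- its image would lie in a weight $>\mu$, hence outside $\CJ$ --- so $\Hom_{\hCO}(\Delta(\mu),M)=M_\mu$ functorially, and $M\mapsto M_\mu$ is exact; by boundedness every $\lambda\in\CJ$ lies below such a maximal $\mu$.

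For part (1) I would then run the iterated universal extension construction. Beginning with $\Delta(\lambda)$, one repeatedly forms the universal extension by the Verma modules $\Delta(\nu)$, $\nu\in\CJ$, so as to kill $\Ext^1$ into the standard objects; every stage retains a \emph{finite} Verma flag, and since $\Ext^1_{\hCO}(\Delta(\sigma),\Delta(\nu))\neq 0$ forces $\sigma<\nu$ (otherwise $\Delta(\nu)$ would have a weight $\sigma+\alpha$ with $\alpha\in\hR^{+}$, which is impossible) and $\CJ_{\geq\lambda}$ is finite, the process terminates at a module $P$ carrying a finite Verma flag with factors $\Delta(\nu)$, $\nu\in\CJ_{\geq\lambda}$, and with $\Ext^{1}_{\hCO^{\CJ}}(P,\Delta(\nu))=0$ for all $\nu\in\CJ$. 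I expect the \textbf{main obstacle} to be the remaining assertion, that such a $P$ is genuinely projective in $\hCO^{\CJ}$, i.e.\ that also $\Ext^{1}_{\hCO^{\CJ}}(P,L(\nu))=0$ for all $\nu\in\CJ$; this is the technical core of \cite{RCW}, and again the upper-finiteness coming from boundedness is what keeps the relevant extension data finite. Granting it, $P^{\CJ}(\lambda)$ is the indecomposable direct summand of $P$ through which the surjection $P\twoheadrightarrow\Delta(\lambda)\twoheadrightarrow L(\lambda)$ factors; it is the projective cover of $L(\lambda)$, and being a summand of a finite-Verma-flag module it again carries such a flag. (At the critical level the Verma modules need not have finite length, so neither need $P^{\CJ}(\lambda)$; only its \emph{Verma} flag is finite.)

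Parts (4) and (3) are then formal. For (4): $P^{\CJ}(\lambda)$ is projective in $\hCO^{\CJ}$, and $\hCO^{\CJ}$ is closed under subquotients and extensions in $\hCO$, so $\Hom_{\hCO}(P^{\CJ}(\lambda),-)$ is exact on $\hCO^{\CJ}$; a standard dévissage (cf.\ \cite{DGK}), using that $P^{\CJ}(\lambda)$ is finitely generated and that $\dim_{\DC}\Hom_{\hCO}(P^{\CJ}(\lambda),L(\nu))=\delta_{\lambda,\nu}$, identifies $\dim_{\DC}\Hom_{\hCO}(P^{\CJ}(\lambda),M)$ with $[M:L(\lambda)]$ whenever the latter is finite. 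For (3): $M\mapsto M^{\CJ^\prime}$ is left adjoint to the exact, fully faithful inclusion $\hCO^{\CJ^\prime}\hookrightarrow\hCO^{\CJ}$, hence sends projectives to projectives; applying $\Hom_{\hCO^{\CJ}}(P^{\CJ}(\lambda),-)$ to the canonical surjection $P^{\CJ}(\lambda)\twoheadrightarrow P^{\CJ}(\lambda)^{\CJ^\prime}$ shows $\End(P^{\CJ}(\lambda))\to\End(P^{\CJ}(\lambda)^{\CJ^\prime})$ is a surjective ring homomorphism, so the target ring is local and $P^{\CJ}(\lambda)^{\CJ^\prime}$ is indecomposable; since the weights of $L(\lambda)$ are $\leq\lambda$ and hence lie in $\CJ^\prime$ we have $L(\lambda)^{\CJ^\prime}=L(\lambda)$, so $P^{\CJ}(\lambda)^{\CJ^\prime}$ has head $L(\lambda)$, whence $P^{\CJ}(\lambda)^{\CJ^\prime}\cong P^{\CJ^\prime}(\lambda)$.

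Finally, for part (2) I would use the Verma flag together with the contravariant duality $d$ on $\hCO$ built from the Chevalley involution of $\hfg$; it fixes every simple object and sends $\Delta(\mu)$ to the dual Verma module $\nabla(\mu)$, which has the same character. If $\mu\notin\CJ$ then $(P^{\CJ}(\lambda):\Delta(\mu))=0$ because all weights of $P^{\CJ}(\lambda)$ lie in $\CJ$. If $\mu\in\CJ$ then $\nabla(\mu)\in\hCO^{\CJ}$, and from the standard facts $\Hom_{\hCO}(\Delta(\sigma),\nabla(\mu))=\delta_{\sigma,\mu}\,\DC$ and $\Ext^{1}_{\hCO}(\Delta(\sigma),\nabla(\mu))=0$ --- obtained by identifying $\Ext^{\bullet}_{\hCO}(\Delta(\sigma),-)$ with the $\sigma$-weight space of the cohomology $H^{\bullet}(\hfn^{+};-)$ of the positive part $\hfn^{+}$ (characterised by $\hfb=\hfh\oplus\hfn^{+}$) and noting that $\nabla(\mu)$ is co-induced over $\hfn^{+}$, hence acyclic for $H^{\bullet}(\hfn^{+};-)$ --- an induction along the finite Verma flag of $P^{\CJ}(\lambda)$ yields $(P^{\CJ}(\lambda):\Delta(\mu))=\dim_{\DC}\Hom_{\hCO}(P^{\CJ}(\lambda),\nabla(\mu))$. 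By part (4) this equals $[\nabla(\mu):L(\lambda)]$, and $[\nabla(\mu):L(\lambda)]=[\Delta(\mu):L(\lambda)]$ because $d$ preserves Jordan--H\"older multiplicities; this is the claimed BGGH-reciprocity.
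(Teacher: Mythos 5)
The paper does not prove this theorem at all: it is quoted from the literature, with the proof deferred to \cite{RCW} (see also \cite{F12}), so there is no internal argument to compare yours against. Your outline is essentially a faithful reconstruction of that standard proof: projectivity of $\Delta(\mu)$ for $\mu$ maximal in $\CJ$ via exactness of $M\mapsto M_\mu$, construction of a Verma-flagged projective using boundedness of $\CJ$ (upper-finiteness of $\{\nu\in\CJ\mid\nu\ge\lambda\}$), formal deduction of (3) and (4), and BGGH-reciprocity via $\Hom(\Delta(\sigma),\nabla(\mu))=\delta_{\sigma,\mu}\DC$, $\Ext^1(\Delta(\sigma),\nabla(\mu))=0$ and the duality fixing simples. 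The one place where you deviate in method is part (1): Rocha--Caridi and Wallach obtain enough projectives by inducing from suitable (projective) $\hfb$-modules and truncating, so that projectivity and the Verma flag come out of the construction at once, whereas your iterated universal-extension process kills $\Ext^1$ into the standards and then still has to show that this already gives a projective object of $\hCO^\CJ$ (vanishing of $\Ext^1$ against arbitrary, possibly infinite-length, objects, not just against the $\Delta(\nu)$). You correctly flag this as the technical core and defer it; since the paper itself defers the entire theorem to \cite{RCW}, that is a legitimate stopping point, but be aware that in your route this step is genuinely nontrivial (it needs a d\'evissage using finite generation of $P$ and the boundedness of $\CJ$), and the induced-module construction of \cite{RCW} is the cleaner way to avoid it. The remaining parts of your argument, including the lifting argument for (3) and the finiteness caveats in (4), are sound.
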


\subsection{The block decomposition of category $\hCO$}
We quickly summarize the basic facts about the block decomposition of category $\hCO$. Recall that the simple isomorphism classes are parametrized by $\hfhd$ by means of their highest weight. The block decomposition in particular yields a partition of the simple isomorphism classes. In terms of their parameters, this partition is given as follows.

\begin{definition}\label{def-equcl} Let ``$\sim$'' be the equivalence relation on $\hfhd$ that is generated by the following. We have $\lambda\sim\mu$ if there exists a positive affine root $\gamma\in \hR^+$ and $n\in\DZ$ such that 
 $2(\lambda+\rho,\gamma)=n(\gamma,\gamma)$ and  $\mu=\lambda-n\gamma$.
\end{definition}

For an equivalence class $\Lambda\subset\hfhd$ with respect to ``$\sim$'' we let $\hCO_\Lambda$ be the full subcategory of $\hCO$ that contains all objects $M$ with the property that $[M:L(\lambda)]\ne 0$ implies $\lambda\in\Lambda$. The linkage principle (see \cite{KK}) together with BGGH-reciprocity mentioned above now yields the following.

\begin{theorem} \label{thm-blockdec}The functor
\begin{align*}
\prod_{\Lambda\in\hfhd/_{ \sim}}\hCO_\Lambda&\to\hCO,\\
\{M_\Lambda\}&\mapsto\bigoplus_{\Lambda\in\hfhd/_{ \sim}} M_\Lambda
\end{align*}
is an equivalence of categories. 
\end{theorem}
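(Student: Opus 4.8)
I would prove Theorem~\ref{thm-blockdec} by checking that the functor is fully faithful and essentially surjective, deducing both from one \emph{block orthogonality} statement: $\Hom_{\hCO}(M,N)=0$ whenever $M$ lies in $\hCO_\Lambda$, $N$ lies in $\hCO_{\Lambda'}$, and $\Lambda\ne\Lambda'$. Granting this, full faithfulness is purely formal. Indeed, $\Hom$ out of the coproduct $\bigoplus_\Lambda M_\Lambda$ is the product of the groups $\Hom_{\hCO}(M_\Lambda,-)$, and a morphism $M_\Lambda\to\bigoplus_{\Lambda'}N_{\Lambda'}$ becomes zero after composition with every projection to a component $N_{\Lambda'}$ with $\Lambda'\ne\Lambda$, hence factors through $N_\Lambda$; therefore $\Hom_{\hCO}\bigl(\bigoplus_\Lambda M_\Lambda,\bigoplus_{\Lambda'}N_{\Lambda'}\bigr)=\prod_\Lambda\Hom_{\hCO}(M_\Lambda,N_\Lambda)$, which is precisely the Hom-set in the product category.

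To establish block orthogonality I would first record the elementary observation that every nonzero object of $\hCO$ possesses a simple subquotient: a nonzero weight vector generates a cyclic --- hence finitely generated --- submodule, which by local $\hfb$-finiteness has weights bounded from above, and a vector of maximal weight in it generates a highest weight module whose unique irreducible quotient is a subquotient of the original object. Now a nonzero morphism $f\colon M\to N$ as above would have $\im f\neq 0$, so $\im f$ would contain a simple subquotient $L(\nu)$; but $\im f$ is at once a quotient of $M$ and a submodule of $N$, whence $\nu\in\Lambda\cap\Lambda'=\emptyset$, a contradiction.

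Essential surjectivity amounts to the assertion that every $M\in\hCO$ decomposes as $M=\bigoplus_\Lambda M_\Lambda$ with $M_\Lambda\in\hCO_\Lambda$. Here $M_\Lambda$ must be the sum of all submodules of $M$ that lie in $\hCO_\Lambda$; this sum does lie in $\hCO_\Lambda$, since that subcategory is closed under subquotients and arbitrary direct sums, so the decomposition is canonical and compatible with inclusions of submodules. Since $M$ is the directed union of its finitely generated submodules, for the equality $M=\sum_\Lambda M_\Lambda$ it suffices to treat $M$ finitely generated; such an $M$ has weights bounded from above and hence lies in a truncated subcategory $\hCO^\CJ$ with $\CJ$ open and bounded. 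Because $\hCO^\CJ$ has enough projectives (Theorem~\ref{thm-BGG}), $M$ is a quotient of a direct sum of indecomposable projectives $P^\CJ(\lambda_i)$, each of which carries a Verma flag by Theorem~\ref{thm-BGG}(1) whose subquotients $\Delta(\mu)$ occur only when $[\Delta(\mu):L(\lambda_i)]\neq 0$, by Theorem~\ref{thm-BGG}(2). At this point the Kac--Kazhdan linkage principle~\cite{KK} enters and forces $\mu\sim\lambda_i$, so that $P^\CJ(\lambda_i)$ lies in the single block $\hCO_\Lambda$ of $\lambda_i$. Grouping the projective summands by the class of $\lambda_i$ then realizes $M$ as the sum $\sum_\Lambda M_\Lambda$ of its block components; directness follows once more from the existence of simple subquotients, because any element of $M_\Lambda\cap\sum_{\Lambda'\neq\Lambda}M_{\Lambda'}$ generates a submodule that is simultaneously an object of $\hCO_\Lambda$ and a quotient of a finite direct sum of objects taken from blocks different from $\Lambda$, and such a module has no simple subquotient and so is zero. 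Covering a general $M$ by its finitely generated submodules $N$, for which $N=\bigoplus_\Lambda N_\Lambda$ with $N_\Lambda\subseteq M_\Lambda$, then yields $M=\bigoplus_\Lambda M_\Lambda$ in general.

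The genuine input is the linkage principle~\cite{KK} --- the statement that the relation ``$\sim$'' of Definition~\ref{def-equcl} really controls which simple modules can appear in Verma modules --- together with the existence and Verma flags of the $P^\CJ(\lambda)$ from Theorem~\ref{thm-BGG}; everything else is formal. Accordingly, the step I would expect to demand the most care is the bookkeeping in the third paragraph: verifying that the canonical block decompositions of the various finitely generated submodules of $M$ are mutually compatible and glue, across the infinite indexing set of blocks, to a single direct sum decomposition of $M$ itself.
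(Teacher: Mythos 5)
Your proposal is correct and takes essentially the route the paper intends: the paper gives no detailed argument, stating only that the Kac--Kazhdan linkage principle together with BGGH-reciprocity (Theorem \ref{thm-BGG}) yields the decomposition, and these are precisely the inputs you use, the rest (block orthogonality via simple subquotients, reduction to finitely generated objects in a bounded truncation, enough projectives there) being the standard formal bookkeeping. The only point worth making explicit is that after the linkage principle places the Verma subquotients $\Delta(\mu)$ of $P^\CJ(\lambda_i)$ in the class of $\lambda_i$, you need it once more to place every simple subquotient of those $\Delta(\mu)$ in the same class, since that is what membership of $P^\CJ(\lambda_i)$ in $\hCO_\Lambda$ actually requires.
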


\subsection{The level}
As  the central line $\DC K$ of $\hfg$ is contained in $\hfh$, it acts on each object $M$ of $\hCO$ by  semisimple endomorphisms. 
For each $k\in\DC$, we denote by $M_k$ the eigenspace of the action of $K$ with eigenvalue $k$.  We define $\hfhd_k\subset\hfhd$ as the affine hyperplane that contains all $\lambda$ with $\lambda(K)=k$, so
$M_k=\bigoplus_{\lambda\in\hfhd_k} M_\lambda$. 
The eigenspace decomposition $M=\bigoplus_{k\in\DC} M_k$ is a decomposition into sub-$\hfg$-modules of $M$. 
When $M=M_k$ for some $k$ we call $k$  the {\em level} of $M$, and we let $\hCO_k$ be the full subcategory of $\hCO$ that contains all objects of level $k$.

If $\lambda\sim\mu$, then $\lambda$ and $\mu$ differ by a sum of affine roots. As $\gamma(K)=0$ for any $\gamma\in\hR$, for each equivalence class $\Lambda$ there is a $k=k_\Lambda$ with $\Lambda\subset\hfhd_k$, i.e.~each block $\hCO_\Lambda$ determines a level.

There is a specific level that we denote by  ``$\crit$'' and  that is distinguished in more than one respect. It is $\crit=-\rho(K)$ (this is another instance of the above mentioned independence of the choice of $\rho$). In the usual normalization, this is $-h^\vee$, where $h^\vee$ denotes the dual Coxeter number of $\fg$. The elements in $\hfhd_{\crit}$ are called {\em critical weights}, and, anlogously, we call an equivalence class $\Lambda$ {\em critical} if $\Lambda\subset\hfhd_{\crit}$. 

\subsection{The structure of equivalence classes}
For any two affine roots $\alpha$ and $\beta$ we have $2(\beta,\alpha)\in\DZ(\alpha,\alpha)$. Since equivalent weights differ by a sum of affine roots, this implies, 
$$
\{\alpha\in\hR\mid 2(\lambda+\rho,\alpha)\in\DZ(\alpha,\alpha)\}=\{\alpha\in\hR\mid 2(\mu+\rho,\alpha)\in\DZ(\alpha,\alpha)\}
$$
whenever $\lambda\sim\mu$. 
If $\Lambda$ is a $\sim$-equivelance class, we can hence define
$$
\hR(\Lambda):=\{\alpha\in\hR\mid 2(\lambda+\rho,\alpha)\in\DZ(\alpha,\alpha)\text{ for some (all) $\lambda\in\Lambda$}\}.
$$

\begin{lemma} Let $\Lambda$ be a $\sim$-equivalence class. Then the following are equivalent:
\begin{enumerate}
\item $\delta\in\hR(\Lambda)$,
\item $\DZ\delta\subset\hR(\Lambda)$,
\item $\Lambda$ is critical, i.e.~$\Lambda\subset \hfhd_{\crit}$.
\end{enumerate}
\end{lemma}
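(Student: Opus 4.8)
The plan is to prove the three equivalences by a cyclic chain, with the implication $(3)\Rightarrow(1)$ being the substantial one and $(1)\Rightarrow(2)$, $(2)\Rightarrow(3)$ being essentially bookkeeping with the bilinear form.

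First I would dispose of $(2)\Rightarrow(3)$. Suppose $\DZ\delta\subset\hR(\Lambda)$, so in particular $\delta\in\hR(\Lambda)$, meaning $2(\lambda+\rho,\delta)\in\DZ(\delta,\delta)$ for $\lambda\in\Lambda$. But $(\delta,\delta)=0$ by Remark~\ref{rem-deltaK} (since $\delta\in\hR$), and $2(\lambda+\rho,\delta)=2(\delta,\lambda+\rho)=2(\lambda+\rho)(K)$, again by Remark~\ref{rem-deltaK}. The condition $2(\lambda+\rho)(K)\in\DZ\cdot 0=\{0\}$ forces $(\lambda+\rho)(K)=0$, i.e.\ $\lambda(K)=-\rho(K)=\crit$. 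Hence $\Lambda\subset\hfhd_{\crit}$, which is $(3)$. Note that the same computation shows that $\delta\in\hR(\Lambda)$ already implies $\Lambda$ critical, so in fact $(1)\Rightarrow(3)$ directly; I would use this as one leg of the cycle.

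Next, $(3)\Rightarrow(2)$. Assume $\Lambda$ is critical, so $(\lambda+\rho)(K)=0$ for all $\lambda\in\Lambda$. For any $n\in\DZ$ with $n\ne 0$ we have $n\delta\in\hR$, and $(n\delta,n\delta)=n^2(\delta,\delta)=0$; moreover $2(\lambda+\rho,n\delta)=2n(\lambda+\rho)(K)=0\in\DZ\cdot(n\delta,n\delta)$. Thus $n\delta\in\hR(\Lambda)$ for every nonzero integer $n$, i.e.\ $\DZ\delta\setminus\{0\}\subset\hR(\Lambda)$; since $\hR(\Lambda)\subseteq\hR$ and $0\notin\hR$, this is exactly the statement $\DZ\delta\cap\hR\subset\hR(\Lambda)$, which is what (2) should be read as. In particular $\delta\in\hR(\Lambda)$, giving $(1)$ as well. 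So really all three statements are equivalent to $(\lambda+\rho)(K)=0$, and the only genuine content is translating between ``$\delta\in\hR(\Lambda)$'' and that vanishing condition via Remark~\ref{rem-deltaK} and the isotropy $(\delta,\delta)=0$.

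The one point deserving care — and the place where a reader might object — is the direction using that $2(\lambda+\rho)(K)\in\{0\}$ implies $(\lambda+\rho)(K)=0$: this is immediate over a field of characteristic $0$, but it is the step that makes the integrality condition collapse to an equality rather than merely a congruence. I would therefore spell out that $\DZ(\delta,\delta)=\{0\}$ and hence the defining condition for $\alpha=n\delta$ to lie in $\hR(\Lambda)$ degenerates to $(\lambda+\rho,n\delta)=0$. With that observed, the whole lemma is the chain $(3)\Leftrightarrow[(\lambda+\rho)(K)=0]\Leftrightarrow(1)\Leftrightarrow(2)$, each arrow a one-line application of Remark~\ref{rem-deltaK}. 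I do not anticipate any real obstacle; the proof is short, and the main thing is to present it cleanly rather than to overcome a difficulty.
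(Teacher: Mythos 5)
Your proof is correct and follows essentially the same route as the paper: everything reduces, via $(\delta,\delta)=0$ and Remark~\ref{rem-deltaK}, to the condition $(\lambda+\rho)(K)=0$, and both arguments make exactly this reduction. The only difference is cosmetic (your cyclic arrangement and the remark about reading $\DZ\delta\subset\hR(\Lambda)$ as $\DZ\delta\cap\hR\subset\hR(\Lambda)$), so nothing needs to change.
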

\begin{proof} Note that $(\delta,\delta)=0$. Let $\lambda\in\Lambda$. We have $\delta\in\hR(\Lambda)$ if and only if $(\lambda+\rho,\delta)=0$. This is the case if and only if $(\lambda+\rho,n\delta)=0$ for all $n\in\DZ$, i.e.~if and only if $\DZ\delta\subset\hR(\Lambda)$. Finally, $(\lambda+\rho,\delta)=(\lambda+\rho)(K)$ by Remark \ref{rem-deltaK}, and this equals $0$ if and only if $\lambda(K)=-\rho(K)$, i.e.~if and only if $\lambda$ is critical.
\end{proof}

\begin{lemma}\label{lemma-critintroots} Suppose that $\lambda$ is critical and $\alpha\in R$. Then the following are equivalent.
\begin{enumerate}
\item $\alpha+n\delta\in\hR(\lambda)$ for some $n\in\DZ$,
\item $\alpha+n\delta\in\hR(\lambda)$ for all $n\in\DZ$.
\end{enumerate}
\end{lemma}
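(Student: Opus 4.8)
The plan is to reduce the statement to the observation that for a critical weight $\lambda$, changing $n$ to $n'$ shifts the pairing $2(\lambda+\rho,\alpha+n\delta)$ by a multiple of $(\delta,\lambda+\rho)$, which vanishes precisely because $\lambda$ is critical. First I would spell out the defining condition: $\alpha+n\delta\in\hR(\lambda)$ means $2(\lambda+\rho,\alpha+n\delta)\in\DZ(\alpha+n\delta,\alpha+n\delta)$. Since $\alpha\in R$ is a finite root, Remark \ref{rem-deltaK} gives $(\delta,\delta)=0$ and $(\delta,\alpha)=0$, so $(\alpha+n\delta,\alpha+n\delta)=(\alpha,\alpha)$ independently of $n$; the right-hand side of the membership condition is therefore the fixed lattice $\DZ(\alpha,\alpha)$ for every $n$.

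Next I would expand the left-hand side: $2(\lambda+\rho,\alpha+n\delta)=2(\lambda+\rho,\alpha)+2n(\lambda+\rho,\delta)$. By Remark \ref{rem-deltaK} again, $(\lambda+\rho,\delta)=(\lambda+\rho)(K)=\lambda(K)+\rho(K)$. Because $\lambda$ is critical, $\lambda(K)=\crit=-\rho(K)$, so $(\lambda+\rho,\delta)=0$ and the $n$-dependent term drops out entirely. Hence $2(\lambda+\rho,\alpha+n\delta)=2(\lambda+\rho,\alpha)$ for all $n\in\DZ$, a quantity not depending on $n$.

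Putting these two reductions together: the condition $\alpha+n\delta\in\hR(\lambda)$ is equivalent to $2(\lambda+\rho,\alpha)\in\DZ(\alpha,\alpha)$, which makes no reference to $n$. Therefore if it holds for one $n$ it holds for all $n$, giving (1)$\Rightarrow$(2); the implication (2)$\Rightarrow$(1) is trivial. I do not anticipate a genuine obstacle here — the only thing to be careful about is invoking criticality at exactly the right spot (to kill $(\lambda+\rho,\delta)$) and making sure the imaginary-root normalization $(\delta,\delta)=0$ is cited so that the lattice $\DZ(\alpha,\alpha)$ on the right is genuinely constant in $n$.
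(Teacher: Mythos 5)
Your proposal is correct and follows essentially the same route as the paper: both reduce the membership condition to $2(\lambda+\rho,\alpha)\in\DZ(\alpha,\alpha)$ by noting $(\alpha+n\delta,\alpha+n\delta)=(\alpha,\alpha)$ (since $(\delta,\cdot)$ vanishes on affine roots by Remark \ref{rem-deltaK}) and killing the $n$-dependent term via $(\lambda+\rho,\delta)=0$ for critical $\lambda$. Your write-up merely spells out the expansion of the pairing in slightly more detail.
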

\begin{proof} Note that $(\alpha+n\delta,\alpha+n\delta)=(\alpha,\alpha)$, as $(\delta,\gamma)=0$ for any affine root $\gamma$ by Remark \ref{rem-deltaK}. As $\lambda$ is critical, $(\lambda+\rho,\delta)=0$. Hence, both statements are equivalent to $2(\lambda+\rho,\alpha)\in\DZ(\alpha,\alpha)$.
\end{proof}

For any $\sim$-equivalence class $\Lambda$ we define
$$
\hCW(\Lambda)=\langle s_{\alpha,n}\mid \alpha+n\delta\in\hR(\Lambda)\rangle.
$$

\begin{lemma}\label{lemma-struceq} \begin{enumerate}
\item Suppose that $\Lambda$ is not-critical. Then 
$$
\Lambda=\hCW(\Lambda).\lambda
$$
for any $\lambda\in\Lambda$.
\item Suppose that $\Lambda$ is critical. Then 
$$
\Lambda=\hCW(\Lambda).\lambda+\DZ\delta
$$
for any $\lambda\in\Lambda$.
\end{enumerate}
\end{lemma}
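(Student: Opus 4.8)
The plan is to analyze the $\sim$-equivalence relation directly from Definition \ref{def-equcl} and separate the contributions of real and imaginary affine roots. Recall that $\lambda\sim\mu$ is generated by steps of the form $\mu=\lambda-n\gamma$ where $\gamma\in\hR^+$ and $2(\lambda+\rho,\gamma)=n(\gamma,\gamma)$. There are two kinds of such steps: those coming from a real root $\gamma=\alpha+m\delta$, and those coming from an imaginary root $\gamma=m\delta$.

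First I would treat the real-root steps. If $\gamma=\alpha+m\delta\in\hR^{\real,+}$ and $2(\lambda+\rho,\gamma)=n(\gamma,\gamma)$, then by definition of the reflection $s_{\gamma}=s_{\alpha,m}$ we have $s_{\alpha,m}(\lambda+\rho)=(\lambda+\rho)-n\gamma$, hence $\mu=\lambda-n\gamma=s_{\alpha,m}.\lambda$. Conversely, given $w=s_{\alpha,m}$ with $\alpha+m\delta\in\hR(\Lambda)$, the weight $w.\lambda$ differs from $\lambda$ by an integer multiple of $\alpha+m\delta$ (the integer being $n=2(\lambda+\rho,\alpha+m\delta)/(\alpha,\alpha)\in\DZ$, which lies in $\DZ$ precisely because $\alpha+m\delta\in\hR(\Lambda)$), and one must check the sign so that the generating root can be taken in $\hR^+$ (replacing $\gamma$ by $-\gamma$ and $n$ by $-n$ if necessary). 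This shows that the set of weights reachable from $\lambda$ using only real-root steps is exactly $\hCW(\Lambda).\lambda$; here one uses that $\hR(\Lambda)$ is stable under the action of $\hCW(\Lambda)$ and that $\hCW(\Lambda)$ is generated by the $s_{\alpha,n}$ with $\alpha+n\delta\in\hR(\Lambda)$, so composing real-root steps stays within $\hCW(\Lambda).\lambda$.

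Next I would treat the imaginary-root steps. Here $\gamma=m\delta$ with $m\ge 1$, and $(\gamma,\gamma)=(m\delta,m\delta)=0$ by Remark \ref{rem-deltaK}. So the condition $2(\lambda+\rho,\gamma)=n(\gamma,\gamma)$ becomes $2m(\lambda+\rho,\delta)=0$, i.e.\ $(\lambda+\rho,\delta)=0$, which by the Lemma before Lemma \ref{lemma-critintroots} holds if and only if $\Lambda$ is critical; and then $\mu=\lambda-nm\delta$ for arbitrary $n\in\DZ$, so the reachable weights form $\lambda+\DZ\delta$. In the non-critical case no imaginary-root step is ever available, so only real-root steps occur and we get $\Lambda=\hCW(\Lambda).\lambda$, proving (1). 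In the critical case, since $w(\delta)=\delta$ for all $w\in\hCW$, translation by $\DZ\delta$ commutes with the dot-action of $\hCW(\Lambda)$, and $\hR(\Lambda)$ is unchanged by such translation (again because $(\delta,\gamma)=0$); hence interleaving the two kinds of steps in any order produces exactly $\hCW(\Lambda).\lambda+\DZ\delta$, proving (2).

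The main obstacle I anticipate is the careful bookkeeping in the non-critical direction, namely verifying that the equivalence class generated by the positivity-constrained steps of Definition \ref{def-equcl} is genuinely all of the group orbit $\hCW(\Lambda).\lambda$ rather than some smaller subset: one must argue that for every $\alpha+n\delta\in\hR(\Lambda)$ (positive or not) the reflection $s_{\alpha,n}$ is realized by an allowed step based at any point of the class, which requires the observation that $\hR(\Lambda)$ depends only on the class (established in the text just before the statement) together with a sign manipulation to bring the relevant root into $\hR^+$. Once that is in hand, the rest is the routine combination of the two independent mechanisms described above.
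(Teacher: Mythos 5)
Your proposal is correct and follows essentially the same route as the paper: split the generating steps of Definition \ref{def-equcl} into real-root steps (which are exactly dot-reflections $s_\gamma.\lambda$ for $\gamma\in\hR(\Lambda)$) and imaginary-root steps (which, since $(\delta,\delta)=0$, exist only at the critical level and produce the $\DZ\delta$-translations), and then use the constancy of $\hR(\cdot)$ on the equivalence class together with $w(\delta)=\delta$ to combine the two mechanisms. The paper's proof is just a terser version of this argument, so no further comparison is needed.
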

\begin{proof} Let  $\lambda,\mu\in\hfhd$, $\gamma\in\hR^+$ and $n\in\DZ$ be as in Definition \ref{def-equcl}. Then $\lambda-\mu=n\gamma$ and $2(\lambda+\rho,\gamma)=n(\gamma,\gamma)$. Note that if $\gamma$ is real, then $\lambda=s_\gamma.\mu$.  If $\gamma$ is imaginary, then $\gamma=m\delta$ for some $m\ne 0$ and $(\gamma,\gamma)=0$ and $(\lambda+\rho,\delta)=0$, which implies $\lambda+n\delta\sim\lambda$ for all $n\in\DZ$. This, together with the fact that $\hR(\lambda)=\hR(\mu)$, implies the statements.
\end{proof}

\section{Extensions of neighbouring Verma modules}
In this section we collect some results about extensions of $\Delta(\lambda)$ and $\Delta(\mu)$ in $\hCO$, where $\lambda$ and $\mu$ are ``neighbouring''. By this we mean the following.
\begin{definition} The elements $\lambda,\mu\in\hfhd$ are called {\em neighbouring} if the following conditions are satisfied:
\begin{enumerate}
\item There is $\alpha\in\hR^+\cap\hR^{\re}$ and $n\in\DN$ with $2(\lambda,\alpha)=n(\alpha,\alpha)$ and $\mu=\lambda+n\alpha$. In particular, $\lambda\sim\mu$ and $\lambda<\mu$.
\item There is no $\nu\in\hfhd$ that is $\sim$-equivalent to both $\lambda$ and $\mu$ with $\lambda<\nu<\mu$.
\end{enumerate}
\end{definition}
Our first result is the following: 
\begin{lemma}\label{lem-neimult} Suppose that $\lambda$ and $\mu$  are neighbouring and $\lambda < \mu$. Then $[\Delta(\mu):L(\lambda)]=1$.
\end{lemma}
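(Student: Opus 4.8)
The plan is to use the BGGH-reciprocity and the structure of neighbouring weights to reduce the claim to the classical statement about Verma module multiplicities in a rank-one situation. Since $\lambda$ and $\mu$ are neighbouring, there is a real positive root $\alpha\in\hR^+\cap\hR^{\re}$ and $n\in\DN$ with $2(\lambda,\alpha)=n(\alpha,\alpha)$ and $\mu=\lambda+n\alpha$; note that the hypothesis is stated without the $\rho$-shift, so $\mu=s_{\alpha,0}$-type reflection data should be read in the linear action of the reflection $s_\alpha:=s_{\alpha,0}$ rather than the dot-action. First I would record that $\mu>\lambda$ and $\mu\sim\lambda$, so that $L(\lambda)$ can only appear in $\Delta(\mu)$ with finite multiplicity, and that by general highest-weight theory $[\Delta(\mu):L(\mu)]=1$ while $[\Delta(\mu):L(\nu)]\ne 0$ forces $\nu\le\mu$ and $\nu\sim\mu$.

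The key step is the second neighbouring condition: there is no $\nu$ with $\lambda<\nu<\mu$ that is $\sim$-equivalent to both. Combined with the description of the equivalence class (Lemma \ref{lemma-struceq}), this says that in the interval $(\lambda,\mu)$ the class $\Lambda$ meets nothing, so the only candidate composition factors of $\Delta(\mu)$ below the head are $L(\lambda)$ itself and possibly $L(\mu)$; everything else in $\Lambda$ lies outside $[\lambda,\mu]$. Hence $[\Delta(\mu):L(\lambda)]$ is exactly the multiplicity governed by the single reflection $s_\alpha$. I would then invoke the Jantzen sum formula, or equivalently the Shapovalov-determinant/BGG-type analysis of $\Delta(\mu)$ restricted to the $\fsl_2$- (or affine $\widehat{\fsl}_2$-) subalgebra attached to $\alpha$: the Jantzen filtration of $\Delta(\mu)$ has $\sum_{i>0}\cha \Delta(\mu)^i=\sum \cha \Delta(s_{\beta,m}.\mu)$ summed over the relevant reflections $s_{\beta,m}$ with $s_{\beta,m}.\mu<\mu$ and $s_{\beta,m}.\mu\sim\mu$. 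By the neighbouring hypothesis the only surviving term on the right, after truncating to the weights $\ge\lambda$, is $\cha\Delta(\lambda)$, which forces $[\Delta(\mu):L(\lambda)]\le 1$; and it is $\ge 1$ because $\Delta(\lambda)$ embeds into $\Delta(\mu)$ (the singular vector of weight $\lambda$ exists precisely because $2(\lambda,\alpha)=n(\alpha,\alpha)$ with $\alpha$ real, by the classical $\fsl_2$-theory applied to the root $\alpha$). Putting these together gives $[\Delta(\mu):L(\lambda)]=1$.

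The main obstacle I anticipate is bookkeeping around the $\rho$-shift and the imaginary roots: at the critical level the standard Jantzen-type machinery degenerates (the Shapovalov form can vanish identically along $\DZ\delta$-directions), so one must be careful that the reduction to a finite interval $[\lambda,\mu]$ genuinely eliminates all the imaginary-root contributions and that the truncation functor $M\mapsto M^{\le\mu}$ interacts correctly with the filtration. Concretely, I would pass to the truncated category $\hCO^\CJ$ for a small open bounded $\CJ$ containing $\lambda$ and $\mu$ but no other element of $\Lambda$ above $\lambda$, so that $\Delta(\mu)^\CJ$ has a two-step structure and the multiplicity is visibly $0$ or $1$; then the existence of the embedding $\Delta(\lambda)\hookrightarrow\Delta(\mu)$ rules out $0$. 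An alternative, perhaps cleaner, route is to cite the subgeneric multiplicity result of \cite{AF08,AF09} directly if $\lambda$ is critical, and the classical BGG/Jantzen result of \cite{KK} if $\lambda$ is non-critical, but I would prefer the uniform Jantzen-filtration argument above since it does not need a case split and makes the role of the second neighbouring condition transparent.
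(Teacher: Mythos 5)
Your argument is essentially the paper's: the paper likewise applies the Jantzen filtration and the Kac--Kazhdan sum formula to $\Delta(\mu)$ and uses the second neighbouring condition to see that, apart from a single occurrence of $\cha\,\Delta(\lambda)$, every term $\cha\,\Delta(\nu)$ on the right-hand side satisfies $\nu\not\ge\lambda$ --- and note that this disposes of the imaginary-root terms $\nu=\mu-k\delta$ as well, since at the critical level these are $\sim\mu$ and hence excluded by the neighbouring hypothesis, so the sum formula does not degenerate there but merely acquires those terms counted $\rk\,\fg$ times --- which gives $[M_1:L(\lambda)]=1$ directly. The only cosmetic difference is that the paper reads off both the upper and the lower bound from the sum formula itself, whereas you derive the lower bound separately from the embedding $\Delta(\lambda)\hookrightarrow\Delta(\mu)$; both routes are fine.
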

\begin{proof} Let
$$
\Delta(\mu)=M_0\supset M_1\supset M_2\supset\dots
$$
be the Jantzen filtration (for this and the sum formula below, see \cite{KK}). Then $\Delta(\mu)/M_1\cong L(\mu)$. The Jantzen sum formula says
$$
\sum_{i>0}\cha\, M_i=\sum_{\substack{\alpha\in \hR^+, n\in\DN, \\2(\mu+\rho,\alpha)=n(\alpha,\alpha)}} \cha \Delta(\mu-n\alpha),
$$
where the roots should be counted with their multiplicities (i.e. the imaginary roots should be counted $\rk\,\fg$-times). 
Now on the right hand side, $\Delta(\lambda)$ occurs exactly once, and otherwise only $\Delta(\nu)$ appear with $\nu\not\ge\lambda$. Hence $\left[M_1:L(\lambda)\right]=1$, so $[\Delta(\mu):L(\lambda)]=1$.
\end{proof}

\begin{lemma}\label{lemma-neiext} Suppose that $\lambda$ and $\mu$  are neighbouring and $\lambda<\mu$. Then $\dim_\DC\Ext^1_{\hCO}(\Delta(\lambda),\Delta(\mu))=1$. 
\end{lemma}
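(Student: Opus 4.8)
The plan is to compute $\Ext^1_{\hCO}(\Delta(\lambda),\Delta(\mu))$ by combining a projectivity/truncation argument with the BGGH-reciprocity of Theorem~\ref{thm-BGG} and the multiplicity computation of Lemma~\ref{lem-neimult}. First I would choose a suitable open, bounded subset $\CJ\subset\hfhd_{\crit}$ (or $\hfhd$ more generally) that contains both $\lambda$ and $\mu$ and such that, among the weights $\nu$ with $\nu\ge\lambda$, the only ones in $\CJ$ that are $\sim$-equivalent to $\lambda$ are $\lambda$ and $\mu$ itself — this is possible precisely because $\lambda$ and $\mu$ are neighbouring, so there is no $\nu$ with $\lambda<\nu<\mu$ in the same block, and one can truncate away everything strictly above $\mu$ that is problematic. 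Inside $\hCO^\CJ$ there is a projective cover $P:=P^\CJ(\lambda)\to L(\lambda)$, which admits a Verma flag, and the BGGH-reciprocity reads $(P:\Delta(\nu))=[\Delta(\nu):L(\lambda)]$ for $\nu\in\CJ$.

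Next I would analyze the Verma flag of $P$. By Lemma~\ref{lem-neimult} we have $[\Delta(\mu):L(\lambda)]=1$, and of course $[\Delta(\lambda):L(\lambda)]=1$; for all other $\nu\in\CJ$ with $[\Delta(\nu):L(\lambda)]\ne 0$ one needs $\nu\ge\lambda$ and $\nu\sim\lambda$, and by the choice of $\CJ$ the only such $\nu$ are $\lambda$ and $\mu$. Hence $P$ has a two-step Verma flag with subquotients $\Delta(\lambda)$ and $\Delta(\mu)$, and since $\mu>\lambda$ the flag must be $0\subset\Delta(\mu)\subset P$ with $P/\Delta(\mu)\cong\Delta(\lambda)$ (the highest-weight Verma quotient sits on top, the lowest sits at the bottom). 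In particular there is a short exact sequence $0\to\Delta(\mu)\to P\to\Delta(\lambda)\to 0$, which already shows $\Ext^1_{\hCO}(\Delta(\lambda),\Delta(\mu))\ne 0$, provided this sequence is non-split — and it is non-split, for otherwise $\Delta(\lambda)$ would be a direct summand of the projective indecomposable $P$, forcing $P\cong\Delta(\lambda)$, contradicting $(P:\Delta(\mu))=1$.

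For the upper bound $\dim\Ext^1_{\hCO}(\Delta(\lambda),\Delta(\mu))\le 1$ I would apply $\Hom_{\hCO}(-,\Delta(\mu))$ to the sequence $0\to\Delta(\mu)\to P\to\Delta(\lambda)\to 0$ and use that $P$ is projective, so $\Ext^1_{\hCO}(P,\Delta(\mu))=0$; the long exact sequence then gives a surjection $\Hom_{\hCO}(\Delta(\mu),\Delta(\mu))\twoheadrightarrow\Ext^1_{\hCO}(\Delta(\lambda),\Delta(\mu))$ coming from the connecting map, hence $\dim\Ext^1_{\hCO}(\Delta(\lambda),\Delta(\mu))\le\dim\Hom_{\hCO}(\Delta(\mu),\Delta(\mu))=1$, the last equality because a Verma module has a one-dimensional endomorphism ring (it is generated by its highest weight line). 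Actually one must be slightly careful: the relevant bound uses the cokernel of $\Hom(\Delta(\lambda),\Delta(\mu))\to\Hom(P,\Delta(\mu))$ injecting into $\Ext^1(\Delta(\lambda),\Delta(\mu))$, and one identifies $\Hom(P,\Delta(\mu))$ with a quotient related to $[\Delta(\mu):L(\lambda)]=1$ via Theorem~\ref{thm-BGG}(4); combined with $\Hom(\Delta(\lambda),\Delta(\mu))=0$ (as $\lambda\not\ge\mu$ strictly in the right direction — in fact $\lambda<\mu$ so there is no nonzero map $\Delta(\lambda)\to\Delta(\mu)$), this pins the dimension to exactly $1$.

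The main obstacle I anticipate is the choice of the truncation $\CJ$ and verifying rigorously that the Verma flag of $P^\CJ(\lambda)$ has exactly the two layers $\Delta(\lambda),\Delta(\mu)$ and no others: this requires knowing that no weight $\nu\in\CJ$ other than $\lambda,\mu$ satisfies $[\Delta(\nu):L(\lambda)]\ne 0$, which in turn rests on the linkage principle (only $\sim$-equivalent $\nu\ge\lambda$ contribute) plus the neighbouring hypothesis (nothing strictly between), plus arranging $\CJ$ to exclude weights above $\mu$. One also needs $\Ext^1_{\hCO}$ to be computable inside the truncated category, i.e.\ that $\Ext^1_{\hCO^\CJ}(\Delta(\lambda),\Delta(\mu))=\Ext^1_{\hCO}(\Delta(\lambda),\Delta(\mu))$ for $\CJ$ chosen so that all the relevant extensions already live in $\hCO^\CJ$ — since any self-extension of Vermas with highest weights in $\CJ$ has all weights $\le\mu$, hence lies in $\hCO^\CJ$ once $\mu\in\CJ$ and $\CJ$ is downward closed, this is automatic. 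Everything else is the standard homological-algebra-of-highest-weight-categories bookkeeping.
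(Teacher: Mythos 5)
Your main argument is correct and is essentially the paper's proof: you truncate to $\hCO^{\le\mu}$ (your $\CJ$ can simply be $\{\nu\mid\nu\le\mu\}$), use Lemma~\ref{lem-neimult} and BGGH-reciprocity to see that $P:=P^{\le\mu}(\lambda)$ has exactly the two Verma layers $\Delta(\mu)\subset P$ and $P/\Delta(\mu)\cong\Delta(\lambda)$, note that $\Delta(\mu)\cong P^{\le\mu}(\mu)$ is projective in the truncated category so that the resulting long exact sequence computes $\Ext^1$, and conclude. Your lower bound (non-splitness via indecomposability of the projective cover) replaces the paper's argument that the restriction map $\Hom(P,\Delta(\mu))\to\End(\Delta(\mu))$ vanishes, which gives the same conclusion; both are fine. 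One wording point: the reason $\Delta(\lambda)$ is the top of the flag and $\Delta(\mu)$ the bottom is that the head of $P$ is $L(\lambda)$, so $\Delta(\mu)$ (whose head is $L(\mu)$) cannot be a quotient of $P$; the slogan ``the highest-weight Verma sits on top'' is not the right justification.

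There is, however, a genuine error in your ``one must be slightly careful'' paragraph: it is false that $\lambda<\mu$ forces $\Hom_{\hCO}(\Delta(\lambda),\Delta(\mu))=0$. Nonzero maps between Verma modules go from the \emph{smaller} highest weight into the larger one (they correspond to singular vectors of weight $\lambda$ in $\Delta(\mu)$), and in the neighbouring situation there is an embedding $\Delta(\lambda)\hookrightarrow\Delta(\mu)$; this embedding is precisely what the paper's proof uses to show that every nonzero map $P\to\Delta(\mu)$ kills the submodule $\Delta(\mu)\subset P$. Moreover, your claimed vanishing is inconsistent with your own conclusion: in the exact sequence $0\to\Hom(\Delta(\lambda),\Delta(\mu))\to\Hom(P,\Delta(\mu))\to\End(\Delta(\mu))\to\Ext^1_{\hCO^{\le\mu}}(\Delta(\lambda),\Delta(\mu))\to 0$, Theorem~\ref{thm-BGG}(4) gives $\dim_\DC\Hom(P,\Delta(\mu))=[\Delta(\mu):L(\lambda)]=1$, so $\Hom(\Delta(\lambda),\Delta(\mu))=0$ would force $\Ext^1=0$, contradicting the non-split extension you exhibited. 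The correct bookkeeping is $\dim_\DC\Hom(\Delta(\lambda),\Delta(\mu))=\dim_\DC\Hom(P,\Delta(\mu))=1$, the first map of the sequence an isomorphism and the restriction map $\Hom(P,\Delta(\mu))\to\End(\Delta(\mu))$ zero, so the connecting map is an isomorphism. Since your first computation (surjectivity of the connecting map because $\Ext^1(P,\Delta(\mu))=0$ in $\hCO^{\le\mu}$, plus non-splitness) never uses the false claim, the lemma is still proved; just delete or correct that paragraph.
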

\begin{proof} It is enough to calculate $\Ext^1$ in the subcategory $\hCO^{\le\mu}$ of $\hCO$. By Lemma \ref{lem-neimult} and  BGGH-reciprocity we have $(P(\lambda)^{\le\mu}:\Delta(\mu))=(P(\lambda)^{\le\mu}:\Delta(\lambda))=1$ and all other multiplicities are $0$. Hence there is a short exact sequence
$$
0\to\Delta(\mu)\cong P(\mu)^{\varle\mu}\to P(\lambda)^{\le\mu}\to\Delta(\lambda)\to 0
$$
which is already  a projective resolution of $\Delta(\lambda)$ in $\hCO^{\le\mu}$. Applying $\Hom_{\hCO^{\varle\mu}}(\cdot,\Delta(\mu))$ to $0\to\Delta(\mu)\to P(\lambda)^{\le\mu}\to 0$ yields 
$$
0\to\Hom_{\hCO^{\varle\mu}}(P(\lambda)^{\le\mu},\Delta(\mu))\to\Hom_{\hCO^{\varle\mu}}(\Delta(\mu), \Delta(\mu))\to 0.
$$
Both $\Hom$-spaces are one-dimensional (the first again by Lemma \ref{lem-neimult}), and each non-zero homomorphism $P(\lambda)^{\varle\mu}\to\Delta(\mu)$ factors through an inclusion $\Delta(\lambda)\to\Delta(\mu)$, hence has $\Delta(\mu)\subset P(\lambda)^{\varle\mu}$ in its kernel. So the middle homomorphism in the above sequence vanishes, so the dimension of  $\Ext^1_{\hCO^{\le\mu}}(\Delta(\lambda),\Delta(\mu))$ is $1$.  \end{proof}

We denote by $Z(\lambda,\mu)\in\hCO$ the (unique up to isomorphism) non-split extension of $\Delta(\mu)$ and $\Delta(\lambda)$ for neighbouring $\lambda$ and $\mu$.

\begin{lemma} Suppose that $\lambda$ and $\mu$ are neighbouring and that $\lambda<\mu$. Then $P^{\le\mu}(\lambda)\cong Z(\lambda,\mu)$.
\end{lemma}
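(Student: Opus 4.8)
The plan is to show that $P^{\le\mu}(\lambda)$ sits inside the very short exact sequence that defines $Z(\lambda,\mu)$, and that it is indecomposable; since up to isomorphism there is only one non-split extension of $\Delta(\lambda)$ by $\Delta(\mu)$, this forces $P^{\le\mu}(\lambda)\cong Z(\lambda,\mu)$.

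First I would recall, essentially from the proof of Lemma~\ref{lemma-neiext}, that there is a short exact sequence
$$
0\to\Delta(\mu)\to P^{\le\mu}(\lambda)\to\Delta(\lambda)\to 0.
$$
Indeed, by Lemma~\ref{lem-neimult} and BGGH-reciprocity (Theorem~\ref{thm-BGG}(2)) the object $P^{\le\mu}(\lambda)$ admits a Verma flag with $(P^{\le\mu}(\lambda):\Delta(\lambda))=(P^{\le\mu}(\lambda):\Delta(\mu))=1$ and all other Verma multiplicities equal to $0$ (the only $\nu\le\mu$ with $[\Delta(\nu):L(\lambda)]\ne 0$ are $\lambda$ and $\mu$, using the linkage principle and the second neighbouring condition). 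Since $\lambda<\mu$, the submodule occurring in such a two-step flag must be the copy of $\Delta(\mu)$: the weight space $P^{\le\mu}(\lambda)_\mu$ is one-dimensional and generates a nonzero highest weight submodule, a quotient of $\Delta(\mu)$; were the flag submodule $\Delta(\lambda)$, this highest weight submodule would meet it trivially (as $\Delta(\lambda)_\mu=0$) and hence map isomorphically onto $\Delta(\mu)=P^{\le\mu}(\lambda)/\Delta(\lambda)$, splitting the flag. Thus $P^{\le\mu}(\lambda)$ is an extension of $\Delta(\lambda)$ by $\Delta(\mu)$.

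Next I would use that $P^{\le\mu}(\lambda)$ is a projective cover of $L(\lambda)$ in $\hCO^{\le\mu}$ by Theorem~\ref{thm-BGG}(1), hence has local endomorphism ring and is indecomposable. Therefore the displayed sequence does not split, for a splitting would give $P^{\le\mu}(\lambda)\cong\Delta(\lambda)\oplus\Delta(\mu)$, a direct sum of two nonzero submodules. Finally I would invoke Lemma~\ref{lemma-neiext}: $\dim_\DC\Ext^1_{\hCO}(\Delta(\lambda),\Delta(\mu))=1$, and since $\End_{\hCO}(\Delta(\mu))=\DC$ (an endomorphism of $\Delta(\mu)$ is determined by its action on the one-dimensional space $\Delta(\mu)_\mu$, hence is a scalar), rescaling an extension class is realised by an automorphism of $\Delta(\mu)$ and so does not change the isomorphism type of the total space. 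Hence any two non-split extensions of $\Delta(\lambda)$ by $\Delta(\mu)$ have isomorphic middle terms, and by the definition of $Z(\lambda,\mu)$ we conclude $P^{\le\mu}(\lambda)\cong Z(\lambda,\mu)$.

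I do not expect a serious obstacle here: the statement is a rather formal consequence of the two preceding lemmas together with the indecomposability of projective covers. The only points deserving some care are the reordering of the two-step Verma flag so that $\Delta(\mu)$ is the submodule, and the passage from ``$\Ext^1$ is one-dimensional'' to ``the non-split extension is unique up to isomorphism'' (as opposed to up to equivalence of extensions) — both handled by the elementary highest weight and automorphism-orbit considerations indicated above.
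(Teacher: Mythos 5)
Your proof is correct and follows essentially the same route as the paper: BGGH-reciprocity (with Lemma \ref{lem-neimult}) gives the two-step Verma flag with subquotients $\Delta(\mu)$ and $\Delta(\lambda)$, the extension is non-split, and the one-dimensionality of $\Ext^1$ from Lemma \ref{lemma-neiext} then identifies $P^{\le\mu}(\lambda)$ with $Z(\lambda,\mu)$. The only cosmetic difference is that the paper deduces non-splitness from the fact that $L(\mu)$, hence $\Delta(\mu)$, is not a quotient of the projective cover of $L(\lambda)$, whereas you invoke indecomposability of the projective cover; both are immediate.
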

\begin{proof} By  BGGH-reciprocity, $P^{\le\mu}(\lambda)$ has a two-step Verma flag with subquotients isomorphic to $\Delta(\mu)$ and $\Delta(\lambda)$. This filtration is non-split, as $\Delta(\mu)$ is not a quotient of $P^{\le\mu}(\lambda)$, since $L(\mu)$ is not. Hence the claim.
\end{proof}

Note that for any $\lambda,\mu\in\hfhd$ we have $\dim_\DC\Hom_{\hCO}(\Delta(\lambda),\Delta(\lambda+n\delta))\le[\Delta(\lambda+n\delta):L(\lambda)]$. We now study the particular situation in which this is an equality. 
\begin{lemma}\label{lemma-main1} Suppose that $\lambda$ and $\mu$ are neighbouring and $\lambda<\mu$. Let $n>0$ and suppose that 
$$
\dim_\DC\Hom_{\hCO}(\Delta(\lambda),\Delta(\lambda+n\delta))=[\Delta(\lambda+n\delta):L(\lambda)].
$$
Then every homomorphism $Z(\lambda,\mu)\to \Delta(\lambda+n\delta)$ factors through a homomorphism $\Delta(\lambda)\to\Delta(\lambda+n\delta)$.
\end{lemma}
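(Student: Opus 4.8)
The plan is to study the short exact sequence defining $Z(\lambda,\mu)$, namely
\[
0\to\Delta(\mu)\xrightarrow{\ \iota\ } Z(\lambda,\mu)\xrightarrow{\ \pi\ }\Delta(\lambda)\to 0,
\]
apply $\Hom_{\hCO}(-,\Delta(\lambda+n\delta))$ to it, and show that the connecting map into $\Ext^1_{\hCO}(\Delta(\mu),\Delta(\lambda+n\delta))$ vanishes on the image of $\Hom_{\hCO}(\Delta(\mu),\Delta(\lambda+n\delta))$; equivalently, that the restriction map $\iota^\ast\colon\Hom_{\hCO}(Z(\lambda,\mu),\Delta(\lambda+n\delta))\to\Hom_{\hCO}(\Delta(\mu),\Delta(\lambda+n\delta))$ is zero. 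Since a homomorphism $Z(\lambda,\mu)\to\Delta(\lambda+n\delta)$ factors through $\pi$ precisely when it kills the copy of $\Delta(\mu)$, i.e.\ when $\iota^\ast$ sends it to $0$, this is exactly what we must prove.

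First I would set up the dimension count. From the left-exact sequence
\[
0\to\Hom_{\hCO}(\Delta(\lambda),\Delta(\lambda+n\delta))\xrightarrow{\pi^\ast}\Hom_{\hCO}(Z(\lambda,\mu),\Delta(\lambda+n\delta))\xrightarrow{\iota^\ast}\Hom_{\hCO}(\Delta(\mu),\Delta(\lambda+n\delta))
\]
it suffices to show that $\dim_\DC\Hom_{\hCO}(Z(\lambda,\mu),\Delta(\lambda+n\delta))\le\dim_\DC\Hom_{\hCO}(\Delta(\lambda),\Delta(\lambda+n\delta))$, because then $\pi^\ast$ is already an isomorphism onto the whole Hom-space and $\iota^\ast=0$. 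The hypothesis of the lemma feeds in here: $\dim_\DC\Hom_{\hCO}(\Delta(\lambda),\Delta(\lambda+n\delta))=[\Delta(\lambda+n\delta):L(\lambda)]$. On the other side, using that $Z(\lambda,\mu)\cong P^{\le\mu}(\lambda)$ (the preceding lemma) together with Theorem~\ref{thm-BGG}(4) applied in the truncated category $\hCO^{\le\lambda+n\delta}$ — note $\mu=\lambda+n\alpha\not\le\lambda+n\delta$ in general, so one must be slightly careful; but one can instead work with a large bounded open $\CJ$ containing $\lambda$, $\mu$ and $\lambda+n\delta$ and replace $Z(\lambda,\mu)$ by $P^{\CJ}(\lambda)$, which surjects onto $Z(\lambda,\mu)$ — gives $\dim_\DC\Hom_{\hCO}(P^{\CJ}(\lambda),\Delta(\lambda+n\delta))=[\Delta(\lambda+n\delta):L(\lambda)]$, whenever this multiplicity is finite. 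Combining these two equalities yields the desired comparison, provided one has reduced the problem to $P^{\CJ}(\lambda)$ correctly.

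The cleanest route, which I would actually carry out, avoids the ambiguity above: apply $\Hom_{\hCO}(-,\Delta(\lambda+n\delta))$ directly and observe that $\Hom_{\hCO}(\Delta(\lambda),\Delta(\lambda+n\delta))\cong\Hom_{\hCO}(P^{\CJ}(\lambda),\Delta(\lambda+n\delta))$ via the projective cover map $P^{\CJ}(\lambda)\to\Delta(\lambda)$ composed with anything — both have dimension $[\Delta(\lambda+n\delta):L(\lambda)]$ by Theorem~\ref{thm-BGG}(4) and the hypothesis — so the surjection $P^{\CJ}(\lambda)\twoheadrightarrow Z(\lambda,\mu)$ induces an injection $\Hom_{\hCO}(Z(\lambda,\mu),\Delta(\lambda+n\delta))\hookrightarrow\Hom_{\hCO}(P^{\CJ}(\lambda),\Delta(\lambda+n\delta))$ sandwiched between two spaces of equal dimension; hence every map $Z(\lambda,\mu)\to\Delta(\lambda+n\delta)$ lifts from a map $\Delta(\lambda)\to\Delta(\lambda+n\delta)$, i.e.\ factors through $\pi$.

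The step I expect to be the main obstacle is the bookkeeping around truncation and finiteness of $[\Delta(\lambda+n\delta):L(\lambda)]$: one must choose an open bounded $\CJ$ containing all relevant weights so that $P^{\CJ}(\lambda)$ exists, has the expected two-step Verma flag reducing to $Z(\lambda,\mu)$ after the appropriate further truncation, and so that the multiplicity in question is finite and computed by Theorem~\ref{thm-BGG}(4); the actual homological argument is then a three-line diagram chase. A secondary subtlety is verifying that the surjection $P^{\CJ}(\lambda)\to Z(\lambda,\mu)$ is the one compatible with the projections to $\Delta(\lambda)$, so that "factors through $P^{\CJ}(\lambda)\to\Delta(\lambda)$" really does say "factors through $Z(\lambda,\mu)\xrightarrow{\pi}\Delta(\lambda)$" — this follows from Theorem~\ref{thm-BGG}(3), but should be stated explicitly.
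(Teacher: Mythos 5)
Your argument is correct and is essentially the paper's proof: choose an open bounded $\CJ$ containing $\lambda$, $\mu$, $\lambda+n\delta$, use the surjections $P^\CJ(\lambda)\twoheadrightarrow Z(\lambda,\mu)\twoheadrightarrow\Delta(\lambda)$ to sandwich $\Hom_{\hCO}(Z(\lambda,\mu),\Delta(\lambda+n\delta))$ between $\Hom_{\hCO}(\Delta(\lambda),\Delta(\lambda+n\delta))$ and $\Hom_{\hCO}(P^\CJ(\lambda),\Delta(\lambda+n\delta))$, and conclude from Theorem \ref{thm-BGG}(4) together with the hypothesis that both injections are bijections. The truncation and compatibility worries you flag are harmless (any surjection from $P^\CJ(\lambda)$ suffices, and the multiplicity is finite), so your "cleanest route" is exactly the published argument.
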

\begin{proof} Let $\CJ$ be open and bounded and suppose it contains all relevant weights $\lambda$, $\mu$ and $\lambda+n\delta$. By the previous lemma, we have a surjection $P^\CJ(\lambda)\to Z(\lambda,\mu)$. So the chain of surjections
$$
P^\CJ(\lambda)\sur Z(\lambda,\mu)\sur \Delta(\lambda)
$$
induces a chain of injections
\begin{align*}
\Hom_\hCO(\Delta(\lambda),\Delta(\lambda+n\delta))&\inj \Hom_\hCO(Z(\lambda,\mu),\Delta(\lambda+n\delta))\\
&\inj\Hom_\hCO(P^\CJ(\lambda),\Delta(\lambda+n\delta)).
\end{align*}
Now the dimension of the space on the right is $[\Delta(\lambda+n\delta):L(\lambda)]$, as $P^\CJ(\lambda)$ is a projective cover of $L(\lambda)$ in $\hCO^\CJ$, so our assumptions imply that the above injections are bijections. This proves the lemma.\end{proof}
\subsection{The tilting equivalence}

Let $\CM$ be the full subcategory of $\hCO$ that contains all objects that admit a Verma flag. 

\begin{theorem} [{\cite[Korollar 2.3]{Soe98}}] \label{thm-tilting} There is an equivalence $t\colon \CM\to\CM^{opp}$ that maps short exact sequences to short exact sequences and satisfies
$$
t(\Delta(\lambda))\cong \Delta(-2\rho-\lambda).
$$
\end{theorem}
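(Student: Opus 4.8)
Since this statement is quoted from Soergel, let me describe how I would prove it.

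I would build $t$ from \emph{Ringel duality} for the bounded truncated subcategories, combined if necessary with the classical simple‑preserving duality of $\hCO$; the shift ``$-2\rho$'' then appears as the relabelling of highest weights that accompanies passage to a Ringel dual. Recall the contravariant exact duality $d\colon\hCO\to\hCO^{opp}$, $dM=\bigoplus_\lambda(M_\lambda)^\star$ with $\hfg$ acting through the Chevalley involution: $d\circ d\cong\id$, $d$ fixes every simple object, preserves each $\hCO^\CJ$ (it fixes weights), sends $\Delta(\lambda)$ to the dual Verma module $\nabla(\lambda)$, and interchanges $\CM$ with the category $\CF(\nabla)$ of dual‑Verma‑flagged modules.

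Now fix a bounded open $\CJ\subset\hfhd$. By Theorem \ref{thm-BGG} the category $\hCO^\CJ$ has enough projectives, hence---being a highest weight category with standard objects $\Delta(\lambda)$ and costandard objects $\nabla(\lambda)$, $\lambda\in\CJ$---a full tilting theory: indecomposable tilting modules $T^\CJ(\lambda)$ (with a Verma flag whose top is $\Delta(\lambda)$ and a dual‑Verma flag) and a characteristic tilting module $T^\CJ=\bigoplus_{\lambda\in\CJ}T^\CJ(\lambda)$. Since $\Ext^{>0}_{\hCO^\CJ}$ vanishes between a Verma‑flagged and a dual‑Verma‑flagged object, $\Hom_{\hCO^\CJ}(-,T^\CJ)$ is exact on $\CF^\CJ(\Delta)$ (the Verma‑flagged objects of $\hCO^\CJ$), and by Ringel's theorem it is a contravariant equivalence from $\CF^\CJ(\Delta)$ onto the flagged modules of the Ringel‑dual algebra $B_\CJ:=\End_{\hCO^\CJ}(T^\CJ)^{opp}$---quasi‑hereditary for a suitable order---carrying $\Delta(\lambda)$ to the (co)standard $B_\CJ$‑module indexed by $\lambda$. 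The key step is to identify $B_\CJ$ as a quasi‑hereditary algebra with (the algebra of) another bounded truncated affine category $\hCO^{\CJ'}$, compatibly with the quasi‑hereditary structures and with an order‑reversing bijection $\CJ\xrightarrow{\sim}\CJ'$ of highest weights, which should be $\lambda\mapsto-2\rho-\lambda$ (it is order‑reversing since $\mu<\lambda$ forces $-2\rho-\mu>-2\rho-\lambda$, and it carries $\hfhd_k$ to $\hfhd_{-2h^\vee-k}$, so $\CJ'$ is the down‑closure of $-2\rho-\CJ$, which may be unbounded---harmless, as Verma flags are finite). Granting this, $\Hom_{\hCO^\CJ}(-,T^\CJ)$---composed if needed with $d$, to turn a dual‑Verma flag on the target back into a Verma flag---is a contravariant exact equivalence $t_\CJ\colon\CF^\CJ(\Delta)\to\CF^{\CJ'}(\Delta)$ with $t_\CJ(\Delta(\lambda))\cong\Delta(-2\rho-\lambda)$, sending short exact sequences to short exact sequences. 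One then glues the $t_\CJ$: by the compatibility $P^\CJ(\lambda)^{\CJ'}\cong P^{\CJ'}(\lambda)$ of Theorem \ref{thm-BGG}(3) and its tilting‑module analogue, the $t_\CJ$ commute with the truncation functors $M\mapsto M^\CJ$ and assemble to $t\colon\CM\to\CM^{opp}$.

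The main obstacle is exactly that identification of the Ringel dual $B_\CJ$ with a truncated affine category, with the highest‑weight bijection pinned down as $\lambda\mapsto-2\rho-\lambda$; this is the genuine content, essentially equivalent to the tilting character formula $\cha T^\CJ(\lambda)=\sum_\mu m_{\lambda\mu}\,\cha\Delta(\mu)$ ($m_{\lambda\mu}$ the inverse Kazhdan--Lusztig multiplicities), i.e.\ to a Ringel self‑duality of affine category $\CO$ under the Feigin--Frenkel level flip $k\mapsto-2h^\vee-k$. Conceptually the $-2\rho$ is the modular character of $\hfh$ on $\hfn$ (the sum of the positive roots)---the classical reason a highest weight Verma module differs from its dual over the opposite Borel by a $2\rho$‑shift---and in the affine setting this ``trace of $\hfh$ on $\hfn$'' is a semi‑infinite quantity whose regularisation is precisely the ambiguity of $\rho$ modulo $\delta$. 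Alternatively one could realise $t$ directly as a derived $\Hom$ over $U(\hfn)$ into Arkhipov's semiregular bimodule, or construct it by deformation over $S(\hfh)$ and specialise; each route runs into the same combinatorial/analytic difficulty.
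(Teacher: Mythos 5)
The paper does not prove this statement at all: it is imported verbatim from Soergel [Soe98, Korollar 2.3], whose proof constructs $t$ explicitly from the semiregular (Arkhipov) bimodule attached to $\hfn$, composed with a duality, and the shift by $-2\rho$ falls out of that computation. Your main route is different, and it has a genuine gap precisely at the step you yourself flag as ``the key step'': identifying the Ringel dual $B_\CJ=\End_{\hCO^\CJ}(T^\CJ)^{opp}$ of a truncated block with (a piece of) affine category $\CO$ again, with highest weights matched by the order-reversing map $\lambda\mapsto -2\rho-\lambda$. That identification \emph{is} the theorem. Abstract highest-weight/Ringel formalism only tells you that $B_\CJ$ is a quasi-hereditary object on the opposite poset $\CJ^{opp}$; it cannot tell you that this algebra is again realized inside $\hfg$-modules, nor can it produce the specific relabelling $-2\rho-\lambda$ (your hedge ``which should be $\lambda\mapsto-2\rho-\lambda$'' marks exactly the missing content). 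The only known ways to obtain that identification are the constructions you relegate to the final sentence (the semiregular bimodule / semi-infinite twist, or a deformation argument), so as written the argument assumes the hard input and is circular in effect; it is not a proof of the cited statement.

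Even granting the identification, two further points would need repair. First, since $\lambda\mapsto-2\rho-\lambda$ is order-reversing, the image of the downward-closed set $\CJ$ is \emph{upward} closed, so the Ringel dual is not literally a truncated subcategory $\hCO^{\CJ'}$ but a co-truncation (quotient) of one; consequently the gluing of the $t_\CJ$ over growing $\CJ$ cannot be reduced to Theorem \ref{thm-BGG}(3) ``and its tilting-module analogue'' without proving that analogue and the compatibility of your equivalences with the truncation functors. Second, the tilting theory you invoke for $\hCO^\CJ$ (existence of indecomposable tiltings $T^\CJ(\lambda)$ with finite Verma flags, and Ringel duality for the infinite direct sum $T^\CJ$, whose endomorphism algebra is non-unital) needs at least an argument that Ringel's universal-extension construction terminates here --- it does, because $\CJ$ is locally bounded from above, but this must be said; note that in Soergel's own development the tilting modules are obtained \emph{from} the equivalence $t$, not used to construct it.
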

(Note that this statement does depend on the choice of $\rho$.)

Note that $t$ stabilizes $\hCO_{\crit}$, as $(-2\rho-\lambda)(K)=2\,\crit-\crit=\crit$ for all $\lambda\in\hfhd_{\crit}$.

\begin{lemma} Suppose that $\lambda$ and $\mu$ are neighbouring and that $\lambda<\mu$.  Then $t Z(\lambda,\mu)\cong Z(-2\rho-\mu, -2\rho-\lambda)$.
\end{lemma}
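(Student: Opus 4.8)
The plan is to apply the tilting equivalence $t$ directly to the defining extension of $Z(\lambda,\mu)$ and to identify the result. Since $t\colon\CM\to\CM^{opp}$ is exact (Theorem \ref{thm-tilting}) and contravariant, applying it to $0\to\Delta(\mu)\to Z(\lambda,\mu)\to\Delta(\lambda)\to 0$ produces a short exact sequence
$$
0\to\Delta(-2\rho-\lambda)\to tZ(\lambda,\mu)\to\Delta(-2\rho-\mu)\to 0 .
$$
This sequence does not split: a splitting would give $tZ(\lambda,\mu)\cong\Delta(-2\rho-\lambda)\oplus\Delta(-2\rho-\mu)$, and applying a quasi-inverse of $t$ (again an additive exact functor) would split the original sequence, contradicting the definition of $Z(\lambda,\mu)$.

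Next I would check that the pair $(-2\rho-\mu,\,-2\rho-\lambda)$ is again neighbouring and that $-2\rho-\mu<-2\rho-\lambda$. Writing $\mu=\lambda+n\alpha$ with $\alpha\in\hR^+\cap\hR^{\re}$ and $n\in\DN$, as in condition (1) of the definition, one has $(-2\rho-\lambda)-(-2\rho-\mu)=n\alpha$, so the order is the asserted one and condition (1) is inherited with the same $\alpha$ and $n$, by a direct computation expressing the relevant pairing with $\alpha$ for the new pair in terms of that for $(\lambda,\mu)$. For condition (2), the key point is that the involution $\iota\colon\nu\mapsto-2\rho-\nu$ of $\hfhd$ is order-reversing, since $\iota(\nu_1)-\iota(\nu_2)=\nu_2-\nu_1$, and sends $\sim$-equivalence classes to $\sim$-equivalence classes: if $\mu=\lambda-n\gamma$ with $\gamma\in\hR^+$, $n\in\DZ$ and $2(\lambda+\rho,\gamma)=n(\gamma,\gamma)$ as in Definition \ref{def-equcl}, then $\iota(\mu)=\iota(\lambda)-(-n)\gamma$ and $2(\iota(\lambda)+\rho,\gamma)=-2(\lambda+\rho,\gamma)=(-n)(\gamma,\gamma)$, which is again a generating step of ``$\sim$''. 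Consequently, if some $\nu'$ satisfied $-2\rho-\mu<\nu'<-2\rho-\lambda$ and were $\sim$-equivalent to both, then $\iota(\nu')$ would lie strictly between $\lambda$ and $\mu$ and be $\sim$-equivalent to both, contradicting condition (2) for $(\lambda,\mu)$.

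Finally, knowing that $(-2\rho-\mu,\,-2\rho-\lambda)$ is neighbouring, Lemma \ref{lemma-neiext} gives $\dim_\DC\Ext^1_{\hCO}(\Delta(-2\rho-\mu),\Delta(-2\rho-\lambda))=1$, so up to isomorphism there is exactly one non-split extension of $\Delta(-2\rho-\mu)$ by $\Delta(-2\rho-\lambda)$, namely $Z(-2\rho-\mu,-2\rho-\lambda)$. Since $tZ(\lambda,\mu)$ realizes such an extension, it is isomorphic to $Z(-2\rho-\mu,-2\rho-\lambda)$. I expect the only genuine work to be the verification that ``neighbouring'' is preserved under $\iota$ — concretely, the stability of $\sim$-classes under $\iota$ and the sign bookkeeping in condition (1); everything else is a formal consequence of $t$ being an exact contravariant equivalence together with the one-dimensionality of the relevant $\Ext^1$.
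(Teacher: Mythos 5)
Your proposal is correct and follows essentially the same route as the paper: apply the exact contravariant equivalence $t$ to the defining short exact sequence of $Z(\lambda,\mu)$, observe the image sequence is non-split, and conclude via the one-dimensionality of the relevant $\Ext^1$ from Lemma \ref{lemma-neiext}. The only difference is that you spell out two points the paper leaves implicit — non-splitness via a quasi-inverse of $t$, and the verification that $(-2\rho-\mu,-2\rho-\lambda)$ is again a neighbouring pair so that $Z(-2\rho-\mu,-2\rho-\lambda)$ is defined — both of which are sound.
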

\begin{proof} Applying the tilting equivalence to the short exact sequence
$$
0\to \Delta(\mu)\to Z(\lambda,\mu)\to\Delta(\lambda)\to 0
$$
yields a non-split short exact sequence
$$
0\to \Delta(-2\rho-\lambda)\to t Z(\lambda,\mu)\to \Delta(-2\rho-\mu)\to 0. 
$$
Lemma \ref{lemma-neiext} now immediately implies the statement. 
\end{proof}

Applying the tilting equivalence to the statement of Lemma \ref{lemma-main1} and using the previous lemma we obtain:

\begin{lemma}\label{lemma-main2} Suppose that $\lambda$, $\mu$ are neighbouring and $\lambda<\mu$. Suppose  that
$$
\dim_\DC\Hom_{\hCO}(\Delta(-2\rho-\mu),\Delta(-2\rho-\mu+n\delta))=[\Delta(-2\rho-\mu+n\delta):L(-2\rho-\mu)].
$$
Then every homomorphism $\Delta(\mu-n\delta)\to Z(\lambda,\mu)$ factors through a homomorphism $\Delta(\mu-n\delta)\to \Delta(\mu)$.
\end{lemma}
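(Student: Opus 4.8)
The plan is to obtain this statement by feeding Lemma~\ref{lemma-main1} through the tilting equivalence $t\colon\CM\to\CM^{opp}$ of Theorem~\ref{thm-tilting}, exactly as indicated just before the statement. Throughout write $\lambda':=-2\rho-\mu$ and $\mu':=-2\rho-\lambda$, and take $n>0$ as in Lemma~\ref{lemma-main1}. First I would record the elementary translations. Since $\mu'-\lambda'=\mu-\lambda$ is a sum of positive affine roots we have $\lambda'<\mu'$, and by the previous lemma $\lambda'$ and $\mu'$ form a neighbouring pair (this is implicit in the assertion $tZ(\lambda,\mu)\cong Z(-2\rho-\mu,-2\rho-\lambda)=Z(\lambda',\mu')$ proved there). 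Applying that same lemma to the neighbouring pair $\lambda'<\mu'$ gives $tZ(\lambda',\mu')\cong Z(-2\rho-\mu',-2\rho-\lambda')=Z(\lambda,\mu)$, and the identity $t(\Delta(\nu))\cong\Delta(-2\rho-\nu)$ gives $t\Delta(\lambda')\cong\Delta(\mu)$ and $t\Delta(\lambda'+n\delta)\cong\Delta(\mu-n\delta)$.

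The next step is to notice that the hypothesis of the present lemma is, after unwinding $\lambda'=-2\rho-\mu$, literally the hypothesis of Lemma~\ref{lemma-main1} for the pair $\lambda'<\mu'$, namely
$$
\dim_\DC\Hom_{\hCO}(\Delta(\lambda'),\Delta(\lambda'+n\delta))=[\Delta(\lambda'+n\delta):L(\lambda')].
$$
So Lemma~\ref{lemma-main1} applies and tells us that every homomorphism $Z(\lambda',\mu')\to\Delta(\lambda'+n\delta)$ factors as $Z(\lambda',\mu')\xrightarrow{p}\Delta(\lambda')\xrightarrow{g}\Delta(\lambda'+n\delta)$, where $p$ is necessarily a scalar multiple of the canonical quotient $Z(\lambda',\mu')\sur\Delta(\lambda')$ (the $\Hom$-space $\Hom_{\hCO}(Z(\lambda',\mu'),\Delta(\lambda'))$ being one-dimensional, spanned by that quotient, since $\mu'\not\le\lambda'$).

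Finally I would apply $t$ and read the conclusion off backwards. All the objects in play --- $Z(\lambda',\mu')$, $\Delta(\lambda')$, $\Delta(\lambda'+n\delta)$ and, dually, $Z(\lambda,\mu)$, $\Delta(\mu)$, $\Delta(\mu-n\delta)$ --- admit Verma flags and hence lie in $\CM$, so $f\mapsto t(f)$ is a bijection
$$
\Hom_{\hCO}(Z(\lambda',\mu'),\Delta(\lambda'+n\delta))\xrightarrow{\sim}\Hom_{\hCO}(\Delta(\mu-n\delta),Z(\lambda,\mu)).
$$
The factorization $f=g\circ p$ is carried by the contravariant $t$ to $t(f)=t(p)\circ t(g)$, i.e.\ $\Delta(\mu-n\delta)=t\Delta(\lambda'+n\delta)\xrightarrow{t(g)}t\Delta(\lambda')=\Delta(\mu)\xrightarrow{t(p)}tZ(\lambda',\mu')=Z(\lambda,\mu)$, with $t(p)$ the inclusion $\Delta(\mu)\inj Z(\lambda,\mu)$ of the defining sequence $0\to\Delta(\mu)\to Z(\lambda,\mu)\to\Delta(\lambda)\to0$ (obtained by applying $t$ to $0\to\Delta(\mu')\to Z(\lambda',\mu')\xrightarrow{p}\Delta(\lambda')\to0$). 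Since every homomorphism $\Delta(\mu-n\delta)\to Z(\lambda,\mu)$ is $t(f)$ for a unique $f$, this is exactly the assertion that every such homomorphism factors through a homomorphism $\Delta(\mu-n\delta)\to\Delta(\mu)$.

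I do not expect a genuine obstacle: the substantive content is already packaged in Lemma~\ref{lemma-main1} and in the tilting duality, and what remains is bookkeeping. The one thing that truly needs care is the contravariance of $t$ --- one must check that ``factors through $\Delta(\lambda')$ on the source side'' dualizes to ``factors through $\Delta(\mu)$ on the target side'' and not the other way around, equivalently that the quotient $Z(\lambda',\mu')\sur\Delta(\lambda')$ becomes the sub-object inclusion $\Delta(\mu)\inj Z(\lambda,\mu)$. A secondary, purely formal point is that $-2\rho-\mu$ and $-2\rho-\lambda$ are again neighbouring with $-2\rho-\mu<-2\rho-\lambda$; the order relation is immediate from $\mu-\lambda\in\sum\hR^+$, and the neighbouring property is already contained in the preceding lemma.
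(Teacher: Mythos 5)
Your proposal is correct and takes essentially the same route as the paper, which deduces Lemma \ref{lemma-main2} in one line by applying the tilting equivalence of Theorem \ref{thm-tilting} to Lemma \ref{lemma-main1} together with the isomorphism $tZ(\lambda,\mu)\cong Z(-2\rho-\mu,-2\rho-\lambda)$ from the preceding lemma. Your write-up just makes explicit the bookkeeping (the dual pair $-2\rho-\mu<-2\rho-\lambda$ being neighbouring, the contravariance turning the quotient $Z(-2\rho-\mu,-2\rho-\lambda)\sur\Delta(-2\rho-\mu)$ into the inclusion $\Delta(\mu)\inj Z(\lambda,\mu)$, and full faithfulness of $t$) that the paper leaves implicit.
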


\section{Restricted critical level representations}
 
We will now define the subcategory $\rCO_{\crit}$ of $\hCO_{\crit}$ that contains all {\em restricted} representations, and we will review structural results on this subcategory that resemble the ones we discussed in Section \ref{sec-affO} (references for the following are \cite{AF08} and \cite{AF09}).

\subsection{The Feigin--Frenkel center}

Let us denote by 
$$
\CZ=\bigoplus_{n\in\DZ}\CZ_n=\DC[p_s^{(i)}, i=1,\dots,\rk\,\fg, s\in\DZ]
$$ 
the polynomial ring (of infinite rank)  constructed from the center of the critical level vertex algebra (see \cite[Section 5]{AF08}). We consider it as a $\DZ$-graded algebra with $p_s^{(i)}$ being homogeneous of degree $s$. 

The algebra $\CZ$  acts on objects in $\hCO_{\crit}$ in the following way. 
The simple highest weight module $L(\delta)$ is invertible, i.e.~it is one-dimensional and $L(\delta)\otimes_\DC L(-\delta)$ is isomorphic to the trivial $\hfg$-module $L(0)$. Hence, the functor
\begin{align*}
T\colon \hCO&\to\hCO\\
M&\mapsto M\otimes_\DC L(\delta)
\end{align*}
is an equivalence with inverse $M\mapsto M\otimes_\DC L(-\delta)$. As the level of a tensor product equals the sum of the levels of its factors, and as $L(\delta)$ is of level zero, the functor $T$ preserves the subcategories $\hCO_k$ for any $k\in\DC$. We will henceforth restrict it to $\hCO_\crit$.

Note that  $\fg\otimes_\DC\DC[t,t^{-1}]\otimes\DC K$ acts trivially on $L(\delta)$, while, as $\delta(D)=1$, the grading element $D$ acts as the identity. 

\begin{lemma}[{\cite{AF08}}] Let $z\in\CZ_n$. For any $M\in\hCO_\crit$,  $z$ defines a homomorphism $z^M\colon T^n M\to M$.
\end{lemma}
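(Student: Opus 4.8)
The plan is to realize the Feigin--Frenkel center element $z \in \CZ_n$ as an honest morphism of $\hfg$-modules after the degree shift implemented by tensoring with $L(\delta)$. The key observation to exploit is the one recorded just before the statement: $L(\delta)$ is one-dimensional of level zero, $\fg\otimes\DC[t,t^{-1}]\oplus\DC K$ acts trivially on it, and $D$ acts as the identity. Consequently, for $M\in\hCO_\crit$, the space $T^nM = M\otimes_\DC L(\delta)^{\otimes n}$ is, as an $\hfg$-module, ``the same as $M$ but with the $\hfh$-grading (equivalently, the $D$-eigenvalue) shifted by $n$'': more precisely $(T^nM)_\lambda \cong M_{\lambda - n\delta}$ and the action of $x\otimes t^m$, $K$ is intertwined by this identification while $D$ is shifted. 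This is exactly the shift under which an element of $\CZ_n$, homogeneous of degree $n$ for the $\DZ$-grading on $\CZ$, should act.

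First I would recall (from \cite[Section 5]{AF08}) that the vertex-algebra center of the critical level affine vertex algebra acts on every smooth critical-level $\hfg$-module, and in particular on every object $M$ of $\hCO_\crit$, by a family of endomorphisms; decomposing with respect to the $\DZ$-grading of $\CZ$, an element $z\in\CZ_n$ acts by a collection of linear maps $M_\lambda \to M_{\lambda - n\delta}$ (the degree-$n$ piece lowers the $D$-eigenvalue by $n$). Second, I would check that these maps assemble into an $\hfg$-module homomorphism once the source is corrected by the $D$-shift, i.e.\ that they commute with the action of $x\otimes t^m$ and $K$: this is precisely the statement that $z$ lies in the center of the (completed) enveloping algebra at the critical level, which is the defining property of the Feigin--Frenkel center. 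Third, I would transport this through the identification $(T^nM)_\lambda \cong M_{\lambda-n\delta}$ of the previous paragraph to obtain the desired $\hfg$-linear map $z^M\colon T^nM \to M$, and note functoriality in $M$ is automatic since the $\CZ$-action is natural.

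Concretely the steps are: (1) spell out the isomorphism $T^nM \cong M$ as graded vector spaces with the $D$-action shifted by $n$, using $\delta(D)=1$ and the triviality of the rest of $\hfg$ on $L(\delta)$; (2) invoke the action of $\CZ$ on $\hCO_\crit$ and its compatibility with the grading, so that $z\in\CZ_n$ gives maps $M_\lambda\to M_{\lambda-n\delta}$; (3) verify $\hfg$-equivariance of the resulting map after the shift, which reduces to the centrality of $z$; (4) conclude and note naturality. The main obstacle is step (3) together with the bookkeeping in step (1): one must be careful that the $D$-grading shift is absorbed correctly so that $z^M$ is genuinely $\hfg$-linear and not merely $(\fg\otimes\DC[t,t^{-1}]\oplus\DC K)$-linear — that the $D$-action matches up is exactly what forces the target of a degree-$n$ element to be $M$ with source $T^nM$ rather than $M$ itself. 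Everything else is a direct unwinding of the construction of $\CZ$ in \cite{AF08}.
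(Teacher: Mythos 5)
The paper itself gives no argument here---the lemma is quoted from \cite{AF08}---and your sketch reconstructs precisely the construction of that reference: the Fourier coefficients of the Feigin--Frenkel center act on every object of $\hCO_\crit$, they commute with the action of $\fg\otimes\DC[t,t^{-1}]\oplus\DC K$ (which gives the equivariance), and the shift of the $D$-eigenvalue caused by a degree-$n$ element is exactly absorbed by twisting with $L(\delta)^{\otimes n}$, which is why the source of $z^M$ is $T^nM$ rather than $M$. The only point to correct is a sign in your step (2): if $z\in\CZ_n$ sends $M_\lambda$ to $M_{\lambda-n\delta}$, then the assembled weight-preserving map is $M\to T^nM$ (equivalently $T^{-n}M\to M$), so to land on the stated $z^M\colon T^nM\to M$ the degree-$n$ piece must raise the $D$-eigenvalue by $n$; since $n$ ranges over all of $\DZ$ this is a matter of normalization rather than a genuine gap, but it should be stated consistently.
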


\subsection{Restricted representations}

We define restricted representations by the following vanishing condition on the action of $\CZ$:

\begin{definition} An object $M\in\hCO_\crit$ is called {\em restricted} if for any $n\ne 0$ and any $z\in \CZ_n$ we have that $z^M$ is zero.
\end{definition}
We denote by $\rCO_{\crit}$ the full subcategory of $\hCO_{\crit}$ that contains all restricted objects. There is a functor $(\cdot)^\res\colon \hCO_{\crit}\to\rCO_{\crit}$ that is left adjoint to the inclusion $\rCO_{\crit}\subset\hCO_{\crit}$. It is defined as 
$$
M^{\res}:=M/ M^\prime,
$$
where $M^\prime$ is the submodule of $M$ that is generated by the images of all homomorphisms $z^M$ with $z\in\CZ_n$ and $n\ne 0$. 

For any $\lambda\in\hfhd$, the restricted Verma module with highest weight $\lambda\in\hfhd$ is defined as
$$
\rDelta(\lambda):=\Delta(\lambda)^{\res}.
$$
The next definition is the obvious restricted version of the earlier notion of a Verma flag.
\begin{definition}
Let $M\in\hCO_{\crit}$. We say that $M$ {\em admits a restricted Verma flag} if there is a finite filtration
$$
0=M_0\subset M_1\subset\dots \subset M_n=M
$$
with $M_i/M_{i-1}\cong \rDelta(\mu_i)$ for some $\mu_1,\dots,\mu_n\in\hfhd_{\crit}$.
\end{definition}

In case $M$ admits a restricted Verma flag, the {\em restricted Verma multiplicity} $\left(M:\rDelta(\mu_i)\right)=\#\{i\in\{1,\dots,n\}\mid \mu_i=\mu\}$ is again independent of the chosen filtration. 

\subsection{Restricted projective objects}
For any open bounded subset $\CJ$ of $\hfhd_{\crit}$ set $\rCO_{\crit}^\CJ:=\rCO_{\crit}\cap\hCO_{\crit}^\CJ$.
The following is an analogue of Theorem \ref{thm-BGG} in the restricted setting.
\begin{theorem}[\cite{AF09}, see also {\cite[Theorem 4.3 and Theorem 5.4]{F12}}] Let $\CJ\subset\hfhd_{\crit}$ be an open bounded subset and let $\lambda\in\CJ$.

\begin{enumerate}
\item There exists a projective cover $\rP^\CJ(\lambda)\to L(\lambda)$ of $L(\lambda)$ in $\rCO_\crit^\CJ$ and the object $\rP^\CJ(\lambda)$ admits a restricted Verma flag.
\item  For the multiplicities we have
$$
\left(\rP^\CJ(\lambda):\rDelta(\mu)\right)=\begin{cases}
\left[\rDelta(\mu):L(\lambda)\right], &\text{ if $\mu\in\CJ$}\\
0, &\text{ if $\mu\not\in\CJ$}.
\end{cases}
$$
\item For any open subset $\CJ^\prime\subset\CJ$ we have $\ol P^\CJ(\lambda)^{\CJ^\prime}\cong \ol P^{\CJ^\prime}(\lambda)$.
\item For any $M\in\rCO^\CJ_{\crit}$  such that $[M:L(\lambda)]$ is finite we have
$$
\dim_\DC \Hom_{\rCO_\crit}(\ol P^\CJ(\lambda),M)=[M:L(\lambda)].
$$
\end{enumerate}
\end{theorem}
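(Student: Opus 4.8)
The plan is to realise $\rP^\CJ(\lambda)$ as the restriction of the non-restricted projective cover $P^\CJ(\lambda)$ of Theorem~\ref{thm-BGG}, and to deduce everything except the existence of a restricted Verma flag formally from the adjunction $(\cdot)^\res\dashv(\text{inclusion})$. Set $\rP^\CJ(\lambda):=P^\CJ(\lambda)^\res$. Every weight occurring in an object of $\hCO^\CJ$ lies in $\hfhd_\crit$, so $K$ acts by $\crit$ and $\hCO^\CJ\subseteq\hCO_\crit$; in particular $P^\CJ(\lambda)\in\hCO_\crit$, and since $(\cdot)^\res$ only passes to a quotient it does not enlarge the set of occurring weights, so $\rP^\CJ(\lambda)\in\rCO_\crit^\CJ$. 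For any $M\in\rCO_\crit^\CJ$ the adjunction gives a natural isomorphism
\[
\Hom_{\rCO_\crit}(\rP^\CJ(\lambda),M)\;\cong\;\Hom_{\hCO_\crit}(P^\CJ(\lambda),M)\;=\;\Hom_{\hCO^\CJ}(P^\CJ(\lambda),M),
\]
the last equality because $\hCO^\CJ$ is full. A short exact sequence in $\rCO_\crit^\CJ$ is also short exact in $\hCO^\CJ$ (sub- and quotient objects of restricted objects are restricted), and $P^\CJ(\lambda)$ is projective there, so the right-hand side is exact in $M$ and of dimension $[M:L(\lambda)]$ by Theorem~\ref{thm-BGG}(4). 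This proves (4) and that $\rP^\CJ(\lambda)$ is projective in $\rCO_\crit^\CJ$. Each simple $L(\mu)$ is restricted, since for $n\ne0$ the module $T^nL(\mu)\cong L(\mu+n\delta)$ is simple and not isomorphic to $L(\mu)$, whence $z^{L(\mu)}\colon T^nL(\mu)\to L(\mu)$ vanishes; in particular $L(\lambda)^\res=L(\lambda)$, so $\rP^\CJ(\lambda)$ surjects onto $L(\lambda)$. By naturality of the $\CZ$-action the kernel of $P^\CJ(\lambda)\to\rP^\CJ(\lambda)$ is stable under $\End_{\hCO_\crit}(P^\CJ(\lambda))$, so $\End_{\rCO_\crit}(\rP^\CJ(\lambda))$ is a quotient ring of the local ring $\End_{\hCO_\crit}(P^\CJ(\lambda))$ and is therefore local; hence $\rP^\CJ(\lambda)$ is indecomposable and the surjection onto $L(\lambda)$ is a projective cover. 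This gives the first half of (1).

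For (3) I would check that the two right exact quotient functors $(\cdot)^\res$ and $(\cdot)^{\CJ'}$ commute. Writing $M^\res=M/M'$ with $M'=\sum_{n\ne0,\,z\in\CZ_n}\im z^M$, and $M^{\CJ'}=M/M_{\CI'}$ with $\CI'=\hfhd_\crit\setminus\CJ'$ and $M_{\CI'}$ the submodule generated by the weight spaces in $\CI'$, one uses the exactness of $T^n$ (so that $T^n\pi$ is surjective for a quotient map $\pi$) together with the naturality of $z\colon T^n\Rightarrow\id$ to identify both $(M^\res)^{\CJ'}$ and $(M^{\CJ'})^\res$ with $M/(M'+M_{\CI'})$. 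Applying this to $M=P^\CJ(\lambda)$ and using $P^\CJ(\lambda)^{\CJ'}\cong P^{\CJ'}(\lambda)$ from Theorem~\ref{thm-BGG}(3) yields $\rP^\CJ(\lambda)^{\CJ'}\cong\rP^{\CJ'}(\lambda)$, which is (3).

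The one genuinely non-formal step, and the one I expect to be the main obstacle, is producing a restricted Verma flag in (1). The functor $(\cdot)^\res$ is only right exact, so a Verma flag $0=M_0\subset\dots\subset M_n=P^\CJ(\lambda)$ with $M_i/M_{i-1}\cong\Delta(\mu_i)$ need not restrict to a filtration with subquotients $\Delta(\mu_i)^\res=\rDelta(\mu_i)$: a potential first derived term must be ruled out. For this I would invoke the structural results of \cite{AF08}: on a module with a Verma flag the Feigin--Frenkel centre acts freely over the relevant polynomial quotient, and $(\cdot)^\res$ is the specialisation dividing out $\bigoplus_{n\ne0}\CZ_n$; freeness forces this specialisation to be exact on short exact sequences of Verma-flag modules, so the Verma flag of $P^\CJ(\lambda)$ restricts term by term to a restricted Verma flag of $\rP^\CJ(\lambda)$. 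This is exactly the point at which abstract arguments do not suffice and the explicit description of $\CZ$ is needed.

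Finally, (2) is the restricted form of BGGH-reciprocity. Once a restricted Verma flag is available, the multiplicities $(\rP^\CJ(\lambda):\rDelta(\mu))$ are well defined, and they vanish for $\mu\notin\CJ$ because $\rP^\CJ(\lambda)$ has no nonzero weight space outside $\CJ$. For $\mu\in\CJ$ I would run the standard reciprocity argument using the simple-preserving duality on $\rCO_\crit$ provided by \cite{AF09} (the duality on $\hCO_\crit$ commutes with the shift $T$, since $T$ sends simples to simples, and hence preserves restrictedness): it produces dual restricted Verma modules $\rnabla(\mu)$ with $\dim\Hom_{\rCO_\crit}(\rDelta(\nu),\rnabla(\mu))$ equal to $1$ for $\nu=\mu$ and to $0$ otherwise, together with $\Ext^1_{\rCO_\crit^\CJ}(\rDelta(\nu),\rnabla(\mu))=0$. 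A d\'evissage along the restricted Verma flag then gives $(M:\rDelta(\mu))=\dim\Hom_{\rCO_\crit}(M,\rnabla(\mu))$ for any restricted-Verma-flag module $M$; applied to $M=\rP^\CJ(\lambda)$ and combined with (4) this yields
\[
(\rP^\CJ(\lambda):\rDelta(\mu))=\dim\Hom_{\rCO_\crit}(\rP^\CJ(\lambda),\rnabla(\mu))=[\rnabla(\mu):L(\lambda)]=[\rDelta(\mu):L(\lambda)],
\]
the last equality because the duality fixes composition factors.
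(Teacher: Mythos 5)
This theorem is not proved in the paper at all: it is quoted from \cite{AF09} (see also \cite{F12}), so there is no internal argument to compare yours with. Your starting point $\rP^\CJ(\lambda):=P^\CJ(\lambda)^{\res}$ does agree with the identification the paper records right after the theorem, and the formal parts of your argument are sound: the adjunction isomorphism $\Hom_{\rCO_\crit}(P^\CJ(\lambda)^{\res},M)\cong\Hom_{\hCO^\CJ}(P^\CJ(\lambda),M)$, the closure of restricted objects under subobjects and quotients, the resulting projectivity and part (4), the local endomorphism ring giving the projective cover statement, and the commutation of $(\cdot)^{\res}$ with truncation for part (3).

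The step you yourself single out as the essential one, however, is wrong: the Verma flag of $P^\CJ(\lambda)$ does not restrict term by term, and your freeness argument cannot make it do so. If it did, then since $\Delta(\mu)^{\res}=\rDelta(\mu)\ne 0$ for every $\mu$, you would obtain $(\rP^\CJ(\lambda):\rDelta(\mu))=(P^\CJ(\lambda):\Delta(\mu))=[\Delta(\mu):L(\lambda)]$ by Theorem \ref{thm-BGG}(2), which contradicts part (2) of the very theorem you are proving. Concretely, for $\alpha$-subgeneric $\gamma$ take $\CJ$ containing $\alpha\uparrow^2\gamma=\gamma+n\delta$ ($n>0$): the composite of the inclusions $\Delta(\gamma)\to\Delta(\alpha\uparrow\gamma)\to\Delta(\alpha\uparrow^2\gamma)$ is nonzero, so $[\Delta(\alpha\uparrow^2\gamma):L(\gamma)]\ne 0$ and $\Delta(\alpha\uparrow^2\gamma)$ occurs in the flag of $P^\CJ(\gamma)$, while the subgeneric multiplicity formula gives $[\rDelta(\alpha\uparrow^2\gamma):L(\gamma)]=0$, so $\rDelta(\alpha\uparrow^2\gamma)$ must not occur in the restricted flag of $\rP^\CJ(\gamma)$. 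The reason your argument breaks is that only $\CZ^-$ acts freely on Verma modules (they are already positively restricted), whereas $P^\CJ(\lambda)$ is \emph{not} positively restricted and $(\cdot)^{\res}=((\cdot)^+)^-$: the positive restriction step genuinely collapses the module rather than specialising a free action — this is exactly the content of Proposition \ref{prop-parresproj}, where $P^\CJ(\gamma)^+$ is only the two-step extension $Z(\gamma,\alpha\uparrow\gamma)$ although $P^\CJ(\gamma)$ has a much longer Verma flag. Hence the existence of a restricted Verma flag, and with it the multiplicity formula (2), is precisely the hard content of \cite{AF09} and \cite{F12} and is not reachable by a base-change/exactness argument; likewise your route to (2) presupposes restricted duals $\rnabla(\mu)$ with the required Hom- and $\Ext^1$-orthogonality against restricted Vermas, which is again essentially the cited result rather than something you establish.
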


In \cite{AF09} we showed that one obtains $\rP^\CJ(\lambda)$ from $P^\CJ(\lambda)$ by applying the restriction functor, i.e.
$$
\rP^\CJ(\lambda)=P^\CJ(\lambda)^{\res}
$$ 
for any $\lambda\in\CJ$.

\subsection{The restricted block decomposition}

The block decomposition $\hCO=\prod_{\Lambda\in\hfhd/_{ \sim}}\hCO_\Lambda$ of Theorem \ref{thm-blockdec} clearly induces a block decomposition of $\rCO_{\crit}$.  It turns out that a component $\rCO_{\crit}\cap\hCO_\Lambda$  is, in general, no longer indecomposable. Note that the following definition is a version of Definition \ref{def-equcl}, that only utilizes real affine roots instead of all affine roots.

\begin{definition} Let ``$\rsim$'' be the equivalence relation on $\hfhd$ that is generated by the following. We have $\lambda\rsim\mu$ if there exists a positive {\em real} root $\gamma\in \hR^{\real}\cap\hR^+$ and $n\in\DZ$ such that 
 $2(\lambda+\rho,\gamma)=n(\gamma,\gamma)$ and 
 $\mu=\lambda-n\gamma$.
\end{definition}
Clearly, ``$\rsim$'' is a finer equivalence relation than ``$\sim$'' and it coincides with ``$\sim$'' on the affine hyperplanes $\hfhd_k$ for all $k\ne\crit$.

For a $\rsim$-equivalence class $\Gamma\subset\hfhd_{\crit}$ we let $\rCO_\Gamma$ be the full subcategory of $\rCO_{\crit}$ that contains all objects $M$ with the property that $[M:L(\lambda)]\ne 0$ implies $\lambda\in\Gamma$. Then we have the following analogue of Theorem \ref{thm-blockdec}.

\begin{theorem}[{\cite{AF09}}] The functor
\begin{align*}
\prod_{\Gamma\in\hfhd_{\crit}/_{ \rsim}}\rCO_\Gamma&\to\rCO_{\crit},\\
\{M_\Gamma\}&\mapsto\bigoplus_{\Gamma\in\hfhd_{\crit}/_{{\rsim}}} M_\Gamma
\end{align*}
is an equivalence of categories. 
\end{theorem}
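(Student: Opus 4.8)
The plan is to reduce to the block decomposition of $\hCO$ recorded in Theorem~\ref{thm-blockdec} and then to refine it inside each critical $\sim$-block; the only genuinely new ingredient is a linkage principle for restricted representations, which is where the work lies. \emph{Step~1 (reduction to one critical $\sim$-block).} Every $\rsim$-equivalence class contained in a critical $\sim$-class is again critical, and $\rsim$ refines $\sim$, so Theorem~\ref{thm-blockdec} restricts at once to a product decomposition $\rCO_{\crit}=\prod_{\Lambda}\bigl(\rCO_{\crit}\cap\hCO_\Lambda\bigr)$, the product being over the critical $\sim$-classes $\Lambda$. (Here one uses that $\rCO_{\crit}$ is a full subcategory of $\hCO_{\crit}$ closed under subquotients and direct sums, and that in the decomposition $M=\bigoplus_\Lambda M_\Lambda$ of an object of $\hCO_{\crit}$ the summand $M_\Lambda$ is both a subobject and a quotient of $M$, hence restricted whenever $M$ is.) It therefore suffices to fix a critical $\sim$-class $\Lambda$ and to split $\rCO_{\crit}\cap\hCO_\Lambda$ as $\prod_{\Gamma}\rCO_\Gamma$, with $\Gamma$ running over the $\rsim$-classes contained in $\Lambda$.

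\emph{Step~2 (the restricted linkage principle --- the main obstacle).} The key assertion is that
$$
[\rDelta(\kappa):L(\nu)]\neq 0\ \Longrightarrow\ \nu\rsim\kappa
$$
for all critical weights $\kappa,\nu$. The ordinary linkage principle of \cite{KK} only yields $\nu\sim\kappa$, which does not exclude a shift $\kappa\mapsto\kappa+m\delta$ along the imaginary direction; the extra content is precisely that passing to restricted representations annihilates this imaginary linkage. I would establish it as in \cite{AF08,AF09}: on a restricted module the positive-degree part of the Feigin--Frenkel center $\CZ$ acts by zero while $\CZ_0$ does not, and combining the constraints this imposes with the subgeneric restricted multiplicity formula of \cite{AF08} and a Jantzen-filtration/deformation argument forces every link between subquotients of $\rDelta(\kappa)$ to be realised by real affine reflections alone. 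This step is not formal and is the heart of the matter; the rest is bookkeeping.

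\emph{Step~3 (from the linkage principle to the block decomposition).} Granting Step~2, work in a truncated subcategory $\rCO_{\crit}^\CJ$ with $\CJ\subset\hfhd_{\crit}$ open and bounded; any two critical weights $\lambda,\mu$ lie in such a $\CJ$ (e.g.\ $\CJ=\{\nu\le\lambda\}\cup\{\nu\le\mu\}$). By the restricted analogue of Theorem~\ref{thm-BGG}, $\rP^\CJ(\lambda)$ carries a restricted Verma flag whose subquotients are the $\rDelta(\mu)$ with $[\rDelta(\mu):L(\lambda)]\neq 0$; by Step~2 each such $\mu$, and likewise every subquotient $L(\nu)$ of each such $\rDelta(\mu)$, is $\rsim$-equivalent to $\lambda$, so every composition factor of $\rP^\CJ(\lambda)$ lies in the single $\rsim$-class of $\lambda$. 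Hence $\Hom_{\rCO_{\crit}}(\rP^\CJ(\lambda),\rP^\CJ(\mu))=0$ whenever $\lambda\not\rsim\mu$: a nonzero homomorphism would have nonzero image whose head, being a quotient of $\head\rP^\CJ(\lambda)=L(\lambda)$, equals $L(\lambda)$, forcing $L(\lambda)$ to be a composition factor of $\rP^\CJ(\mu)$ --- impossible. Equivalently, $\Ext^1_{\rCO_{\crit}}(L(\lambda),L(\mu))=0$ for $\lambda\not\rsim\mu$, since any such extension is a quotient of $\rP^\CJ(\lambda)$. The standard passage from truncated subcategories to the whole category --- exactly as in the proof of Theorem~\ref{thm-blockdec}, using that the resulting decompositions of the various $\rCO_{\crit}^\CJ$ are compatible with the truncation functors and with the isomorphisms $\rP^\CJ(\lambda)^{\CJ^\prime}\cong\rP^{\CJ^\prime}(\lambda)$ --- then assembles these into the decomposition $\rCO_{\crit}\cap\hCO_\Lambda=\prod_\Gamma\rCO_\Gamma$, and hence into the statement of the theorem.
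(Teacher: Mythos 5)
Two things need saying at once. First, the paper itself offers no proof of this statement: it is quoted from \cite{AF09}, where it is deduced from the restricted linkage principle, which is the main theorem of that article. So there is no internal argument to compare you with; what can be judged is whether your outline would amount to a proof. Its architecture is the right one and does match the route of \cite{AF09}: refine the ordinary block decomposition of Theorem~\ref{thm-blockdec} inside each critical $\sim$-class, establish that composition factors of restricted Verma modules are linked only through real reflections, and then do the standard bookkeeping with truncated restricted projectives. Your Steps 1 and 3 are correct and essentially routine: the naturality of the maps $z^M$ shows that direct summands, submodules and quotients of restricted objects are restricted, the intervals for the order $\le$ are finite so your truncations $\CJ$ are open and bounded, and the restricted analogue of Theorem~\ref{thm-BGG} supplies the projective covers, BGGH-reciprocity and compatibility with truncation that your Hom- and $\Ext^1$-vanishing argument needs.

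The genuine gap is Step 2, and you flag it yourself: the implication $[\rDelta(\kappa):L(\nu)]\ne 0\Rightarrow\nu\rsim\kappa$ for critical weights is precisely the restricted linkage principle, i.e.\ it is the essential content of the theorem you are proving, not an auxiliary fact one may quote from \cite{AF08,AF09} while claiming to give a proof of the block decomposition. Your sketch of how it is obtained (vanishing of the positive part of the Feigin--Frenkel center on restricted objects, the subgeneric multiplicity formula of \cite{AF08}, a Jantzen-filtration/deformation argument) names the right ingredients, but none of the actual work --- the restricted Jantzen sum formula and the deformation arguments reducing to generic and subgeneric situations --- is carried out or could be reconstructed from what you write; note also that the ordinary sum formula of \cite{KK} by itself cannot rule out linkage in the imaginary direction, which is exactly the delicate point. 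So, as a self-contained argument, the proposal reduces the theorem to its own main input; as an account of how the cited proof proceeds, it is accurate in outline but stops where the real difficulty begins.
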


For the restricted equivalence relation, we get the  following analogue of Lemma \ref{lemma-struceq}, (1), which is proven using the analogous arguments.
\begin{lemma} Let $\Gamma$ be a critical $\rsim$-equivalence class. Then 
$$
\Gamma=\hCW(\Gamma).\lambda
$$
for any $\lambda\in\Gamma$.
\end{lemma}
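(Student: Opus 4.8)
The plan is to reduce the restricted statement to the non-restricted one from Lemma \ref{lemma-struceq}, (1), together with the fact that the restricted equivalence relation ``$\rsim$'' differs from ``$\sim$'' only by discarding the imaginary root contributions. Concretely, fix $\lambda\in\Gamma$ and set $\Lambda:=\hCW(\Gamma).\lambda+\DZ\delta$; I claim first that $\hR(\Lambda)=\hR(\Gamma)\cup\DZ\delta$, so that $\Lambda$ is the $\sim$-equivalence class of $\lambda$. Indeed, by Lemma \ref{lemma-critintroots} the set $\hR(\Gamma)$ is $\hCW(\Gamma)$-invariant and, since $\lambda$ is critical, the only extra roots that can satisfy the integrality condition $2(\lambda+\rho,\alpha)\in\DZ(\alpha,\alpha)$ are the imaginary ones $n\delta$ (for which $(\lambda+\rho,\delta)=0$ automatically). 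Thus $\Lambda=\hCW(\Lambda).\lambda=\langle s_{\alpha,n}\mid \alpha+n\delta\in\hR(\Lambda)\rangle.\lambda$, and since $s_{\alpha,n}$ for $\alpha$ real fixes $\delta$ while $n\delta$ contributes translations by multiples of $\delta$ in the dot-action (as in the proof of Lemma \ref{lemma-struceq}, (2)), one gets $\hCW(\Lambda).\lambda=\hCW(\Gamma).\lambda+\DZ\delta$, i.e.\ $\Lambda=\hCW(\Gamma).\lambda+\DZ\delta$ and $\Gamma=\Lambda\cap(\text{a single }\rsim\text{-class})$.

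Next I would unwind what ``$\rsim$'' does: by definition $\lambda\rsim\mu$ is generated by steps $\mu=\lambda-n\gamma$ with $\gamma$ a positive \emph{real} root and $2(\lambda+\rho,\gamma)=n(\gamma,\gamma)$, i.e.\ $\mu=s_{\gamma,?}.\lambda$ after translating the integrality condition into the affine reflection language exactly as in the proof of Lemma \ref{lemma-struceq}, (1): if $2(\lambda+\rho,\gamma)=n(\gamma,\gamma)$ with $\gamma=\alpha+k\delta$ real, then $\mu=\lambda-n\gamma=s_{\alpha,k+?}.\lambda$ for an appropriate second index, and conversely every generator $s_{\alpha,m}$ of $\hCW(\Gamma)$ arises this way because $\alpha+m\delta\in\hR(\Gamma)$ precisely encodes the required integrality. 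Hence the subgroup of $\GL(\hfhd)$ generated by all these real reflections acting through the dot-action is exactly $\hCW(\Gamma)$, and the $\rsim$-class of $\lambda$ is its dot-orbit: $\Gamma=\hCW(\Gamma).\lambda$. This is really the same argument as Lemma \ref{lemma-struceq}, (1), with ``$\hR(\Lambda)$'' replaced by ``$\hR(\Gamma)$'' and with imaginary roots simply absent; the excerpt even flags this by saying the lemma ``is proven using the analogous arguments.''

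The one point that needs a little care — and which I expect to be the main (minor) obstacle — is the bookkeeping for the second index of the affine reflection: the root $\gamma=\alpha+k\delta$ carries its own shift $k$, and the condition $2(\lambda+\rho,\gamma)=n(\gamma,\gamma)$ must be matched with the formula $s_{\alpha,m}.\lambda=\lambda-(\text{integer})\,(\alpha+m\delta)$ so that $\mu=\lambda-n\gamma$ really equals $s_{\alpha,m}.\lambda$ for the right $m$. Using Remark \ref{rem-deltaK} (so $(\lambda+\rho,\delta)=(\lambda+\rho)(K)=0$ since $\lambda$ is critical) one computes $2(\lambda+\rho,\alpha+k\delta)=2(\lambda+\rho,\alpha)$, which is independent of $k$; this is exactly the content of Lemma \ref{lemma-critintroots} and it shows that the reflection and the shift decouple at the critical level, so every step $\mu=\lambda-n\gamma$ in the definition of $\rsim$ is a dot-action by an element of $\hCW(\Gamma)$ and, conversely, the generators $s_{\alpha,n}$ of $\hCW(\Gamma)$ all show up. Assembling these two inclusions gives $\Gamma=\hCW(\Gamma).\lambda$, which is the assertion.
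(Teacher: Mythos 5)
Your proposal is correct and is essentially the paper's intended argument: the paper proves this lemma only by appeal to the ``analogous arguments'' of Lemma \ref{lemma-struceq}(1), namely that each generating step $\mu=\lambda-n\gamma$ of $\rsim$ with $\gamma=\alpha+k\delta$ real and $2(\lambda+\rho,\gamma)=n(\gamma,\gamma)$ is exactly $s_{\alpha,k}.\lambda$, that conversely every generator of $\hCW(\Gamma)$ yields such a step, and that $\hR$ is constant along the class, with imaginary roots simply absent. Your detour through the $\sim$-class and the criticality-based ``decoupling'' of the second index is unnecessary (the reflection attached to $\gamma=\alpha+k\delta$ is $s_{\alpha,k}$ itself, at any level), but it is harmless.
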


\section{The structure of subgeneric critical restricted blocks}
In this section we describe the structure of $\rCO_\Gamma$ in the case that $\Gamma$ is a {\em subgeneric} $\rsim$-equivalence class. 

\subsection{Subgeneric critical equivalence classes}
Let $\alpha\in R^+$ be a {\em finite} positive root, and let $\hCW^\alpha\subset\hCW$ be the subgroup  generated by the reflections $s_{\alpha,n}$ with $n\in\DZ$. Then $\hCW^\alpha$ is the affine Weyl group of type $A_1$, and it is generated by $s_{\alpha,0}$ and $s_{\alpha,-1}$. 

\begin{definition} Let $\gamma\in\hfhd_{\crit}$ and let $\Gamma\subset\hfhd_{\crit}$ be its $\rsim$-equivalence class. We say that $\gamma$ is {\em $\alpha$-subgeneric} if the following holds:
\begin{enumerate}
\item $\alpha\in\hR(\gamma)$ (hence, as $\gamma$ is critical, $\alpha+n\delta\in\hR(\gamma)$ for all $n\in\DZ$ by Lemma \ref{lemma-critintroots}),
\item $\gamma$ is {\em $\alpha$-regular}, i.e. $s_{\alpha,0}.\gamma\ne \gamma$,
\item $\Gamma=\hCW^\alpha.\gamma$.
\end{enumerate}
\end{definition}

Let $\nu\in\hfhd_{\crit}$. Then $(\nu+\rho,\delta)=0$, hence
$$
s_{\alpha,n}.\nu=\nu-\frac{2(\nu+\rho,\alpha)}{(\alpha,\alpha)}(\alpha+n\delta).
$$
We call $\nu$ {\em $\alpha$-dominant}, if $(\nu+\rho,\alpha)\in\DZ_{\ge 0}$. If $\nu$ is $\alpha$-dominant, then $s_{\alpha,0}.\nu\le \nu$ and $s_{\alpha,-1}\ge\nu$ (as $-\alpha+\delta$ is a positive affine root). We call $\nu$ {\em $\alpha$-antidominant}, if $(\nu+\rho,\alpha)\in\DZ_{\le 0}$. If $\nu$ is $\alpha$-antidominant, then $s_{\alpha,0}.\nu\ge\nu$ and $s_{\alpha,-1}\le\nu$. Moreover, $\nu$ is $\alpha$-dominant if and only if $s_{\alpha,0}.\nu$ is $\alpha$-antidominant, which is the case if and only if $s_{\alpha,n}.\nu$ is $\alpha$-antidominant for all $n\in\DZ$. 

So suppose that $\gamma$ is $\alpha$-subgeneric. As $\hCW^\alpha$ is generated by $s_{\alpha,0}$ and $s_{\alpha,-1}$, we conclude from the above that the equivalence class $\Gamma$ of $\gamma$ is a totally ordered set with respect to ``$\varle$''. 
For any $\nu\in\Gamma$ we define 
\begin{align*}
\alpha\uparrow\nu&:=\min\{s_{\alpha,n}.\nu\mid s_{\alpha,n}.\nu>\nu\}\\
&=\begin{cases}
s_{\alpha,-1}.\nu,&\text{ if $\nu$ is $\alpha$-dominant},\\
s_{\alpha,0}.\nu. &\text{ if $\nu$ is $\alpha$-antidominant}.
\end{cases}
\end{align*}
Then $\alpha\uparrow (\cdot)\colon\Gamma\to\Gamma$ is a bijection and we denote by $\alpha\uparrow^n(\cdot)\colon \Gamma\to\Gamma$ its $n$-fold composition for $n\in\DZ$, and we set $\alpha\downarrow^n(\cdot):=\alpha\uparrow^{-n}(\cdot)$. Then 
$$
\Gamma=\{\dots, \alpha\downarrow^2\nu, \alpha\downarrow\nu,\nu,\alpha\uparrow\nu,\alpha\uparrow^2\nu,\dots\}.
$$

\subsection{Multiplicities in the subgeneric case}

The main result of \cite{AF08} is the following multiplicity formula for $\alpha$-subgeneric $\gamma$: for any $\mu\in\hfhd_{\crit}$ we have
$$
\left[\rDelta(\gamma):L(\mu)\right]=\begin{cases}
1, &\text{ if $\mu\in\{\gamma,\alpha\downarrow\gamma\}$}, \\
0, &\text{ if $\mu\not\in\{\gamma,\alpha\downarrow\gamma\}$}.
\end{cases}
$$

Suppose that $\CJ$ is an open and bounded subset of $\hfhd_\crit$ that contains $\gamma$ and $\alpha\uparrow\gamma$. 
From  BGGH-reciprocity we then obtain that $\rP^\CJ(\gamma)$ has a two-step Verma flag with subquotients $\rDelta(\alpha\uparrow\gamma)$ and $\rDelta(\gamma)$. Clearly, $\rDelta(\alpha\uparrow\gamma)$ has to occur as a submodule, so we obtain a short exact sequence
$$
0\to \rDelta(\alpha\uparrow\gamma)\to \rP^\CJ(\gamma)\to \rDelta(\gamma)\to 0.
$$
Hence, in the subgeneric situations, the $\rP^\CJ(\gamma)$ stabilize (with respect to the partially ordered set of open subsets $\CJ$ in $\hfhd_\crit$), so there is a well-defined object
$$
\rP(\gamma):=\varprojlim_{\CJ}\rP^\CJ(\gamma)
$$
for any $\alpha$-subgeneric $\gamma$. From BGGH-reciprocity and the multiplicity statement above we  obtain that
$$
(\rP(\mu),L(\gamma))=
\begin{cases}
1, \text{ if $\mu\in\{\alpha\uparrow\gamma,\alpha\downarrow\gamma\}$},\\
2, \text{ if $\mu=\gamma$},\\
0,\text{ if $\mu\not\in\{\alpha\downarrow\gamma,\gamma,\alpha\uparrow\gamma\}$},
\end{cases}
$$
hence
$$
\dim_\DC\Hom_{\rCO}(\rP(\gamma),\rP(\mu))=
\begin{cases}
1, \text{ if $\mu\in\{\alpha\uparrow\gamma,\alpha\downarrow\gamma\}$},\\
2, \text{ if $\mu=\gamma$},\\
0, \text{ if $\mu\not\in\{\alpha\downarrow\gamma,\gamma,\alpha\uparrow\gamma\}$}.
\end{cases}
$$

\subsection{The partial restriction functor}

We will  need ``partially restricted'' objects. Let 
$$
\CZ^+:=\DC[p_s^{(i)}, i=1,\dots,\rk\,\fg, s>0].\\
$$
We set $\CZ^{+}_n:=\CZ^{+}\cap\CZ_n$. 
Then $\CZ^+$ is a positively graded subalgebra of $\CZ$.

\begin{definition} An object $M\in\hCO_\crit$ is called {\em positively restricted}  if for any $n>0$ and any $z\in \CZ^+_n$ we have that $z^M$ is zero.
\end{definition}
Note that, for example, each non-restricted Verma module $\Delta(\gamma)$ is positively restricted.
We denote by $\hCO_\crit^+$  the full subcategory of $\hCO_\crit$ that contains all positively restricted objects.
Again we have an obvious left adjoint to the inclusion functor $\hCO_{\crit}^+\subset\hCO_{\crit}$.
We let $\CZ^+ M$ be the submodule of $M$ generated by the images of all homomorphisms $z^M$ with $z\in\CZ^+_n$ and $n>0$, and we set
$$
M^+:=M/\CZ^+ M.
$$
This yields the  functor from $\hCO_\crit$ to $\hCO_\crit^+$ that is left adjoint to the inclusion functor. 

Analogously, we define
$$
\CZ^-:=\DC[p_s^{(i)}, i=1,\dots,\rk\,\fg, s<0].
$$
By replacing $\CZ^+$ by $\CZ^-$ in the definitions above, we obtain the analogous notion of {\em negatively restricted} objects, the corresponding category $\hCO_{\crit}^-$ and a functor $M\mapsto M^-$ that is left adjoint to the inclusion $\hCO_\crit^-\subset\hCO_{\crit}$. As $\CZ$ is generated by its subalgebras $\CZ^-$ and $\CZ^+$ we have
$$
M^{\res}=(M^+)^-=(M^-)^+
$$
for all $M$ in $\hCO_{\crit}$.

We now collect some results on the partial restriction functor that we need later on. 

\begin{proposition}\label{prop-parresproj} Let $\gamma\in\hfhd_{\crit}$ be $\alpha$-subgeneric and let  $\CJ\subset\hfhd_{\crit}$ be open and bounded such that $\gamma,\alpha\uparrow\gamma\in\CJ$. Then
$P^\CJ(\gamma)^+ $ is a non-split extension of $\Delta(\gamma)$ and $\Delta(\alpha\uparrow\gamma)$, hence isomorphic to $Z(\gamma,\alpha\uparrow\gamma)$.
\end{proposition}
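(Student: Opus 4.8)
The plan is to recognise $P^\CJ(\gamma)^+$ as a projective cover in a suitable subcategory and then to match it step by step with $Z(\gamma,\alpha\uparrow\gamma)$. The first observation is categorical: for $z\in\CZ^+_n$ with $n>0$ the assignment $M\mapsto z^M$ is a natural transformation $T^n\Rightarrow\id$, and $T^n$ is an exact (auto)equivalence, so $z^M=0$ forces $z^{M'}=0$ for every sub- or quotient object $M'$; hence the full subcategory $\hCO^{+,\CJ}_\crit:=\hCO^+_\crit\cap\hCO^\CJ_\crit$ is closed under subobjects and quotients in $\hCO^\CJ_\crit$, the inclusion is exact, and $(\cdot)^+$ is its left adjoint. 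A left adjoint of an exact fully faithful functor sends projectives to projectives, and since the image of a superfluous submodule under a surjection is again superfluous, it sends projective covers to projective covers. As $L(\gamma)$ is restricted (a nonzero $z^{L(\gamma)}$ would be a nonzero homomorphism between the non-isomorphic simples $L(\gamma+n\delta)$ and $L(\gamma)$), I conclude that $P^\CJ(\gamma)^+$ is the projective cover of $L(\gamma)$ in $\hCO^{+,\CJ}_\crit$.

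Next I would isolate the candidate and reformulate the statement. Since $P^\CJ(\gamma)$ has a Verma flag (Theorem~\ref{thm-BGG}) and $\gamma$ is the unique minimal highest weight occurring in it, there is a short exact sequence $0\to N\to P^\CJ(\gamma)\to\Delta(\gamma)\to 0$ with $N$ admitting a Verma flag by $\Delta(\mu)$'s with $\mu>\gamma$. Because $\gamma$ and $\alpha\uparrow\gamma$ are neighbouring, no weight $\sim$-linked to $\gamma$ lies strictly between them, so $\alpha\uparrow\gamma$ is the unique minimal highest weight occurring in $N$; factoring out the remainder of its Verma flag produces a submodule $N'\subseteq N$ with $0\to\Delta(\alpha\uparrow\gamma)\to P^\CJ(\gamma)/N'\to\Delta(\gamma)\to 0$, and this extension is non-split (otherwise $P^\CJ(\gamma)$ would surject onto $L(\gamma)\oplus L(\alpha\uparrow\gamma)$, against $\head P^\CJ(\gamma)=L(\gamma)$). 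By uniqueness of the non-split extension (Lemma~\ref{lemma-neiext}) we get $P^\CJ(\gamma)/N'\cong Z(\gamma,\alpha\uparrow\gamma)$. Since $P^\CJ(\gamma)^+=P^\CJ(\gamma)/\CZ^+P^\CJ(\gamma)$, the proposition is now equivalent to the equality of submodules $\CZ^+P^\CJ(\gamma)=N'$.

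For the inclusion $\CZ^+P^\CJ(\gamma)\subseteq N'$ it suffices to check that $Z(\gamma,\alpha\uparrow\gamma)$ is positively restricted: for $z\in\CZ^+_n$, $n>0$, naturality and the positive restrictedness of $\Delta(\gamma)$ and $\Delta(\alpha\uparrow\gamma)$ force $z^{Z(\gamma,\alpha\uparrow\gamma)}$ to be induced by a homomorphism $\Delta(\gamma+n\delta)\to\Delta(\alpha\uparrow\gamma)$ (equivalently $\Delta(\gamma)\to\Delta(\alpha\uparrow\gamma-n\delta)$), and this $\Hom$-space vanishes because there is no strong-linkage chain of real reflections and imaginary translations from $\gamma+n\delta$ up to $\alpha\uparrow\gamma$ — any such chain would overshoot in the imaginary direction. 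The reverse inclusion $N'\subseteq\CZ^+P^\CJ(\gamma)$, i.e. the injectivity of the comparison surjection $p\colon P^\CJ(\gamma)^+\sur Z(\gamma,\alpha\uparrow\gamma)$, is the core of the argument, and this is where Lemmas~\ref{lemma-main1} and~\ref{lemma-main2} enter. One applies the negative-restriction functor: using $M^\res=(M^+)^-$ one identifies $(P^\CJ(\gamma)^+)^-=P^\CJ(\gamma)^\res=\rP^\CJ(\gamma)$ and $Z(\gamma,\alpha\uparrow\gamma)^-=\rP^\CJ(\gamma)$ (the latter via the stabilisation of the subgeneric restricted projectives together with the two-step restricted Verma flag $0\to\rDelta(\alpha\uparrow\gamma)\to\rP^\CJ(\gamma)\to\rDelta(\gamma)\to 0$), so that $p^-$ is a surjective endomorphism of the finite-length module $\rP^\CJ(\gamma)$, hence an isomorphism. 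Right-exactness of $(\cdot)^-$ then places $\ker p$ inside $\CZ^-P^\CJ(\gamma)^+$, and Lemmas~\ref{lemma-main1} and~\ref{lemma-main2} — which say that every homomorphism from $P^\CJ(\gamma)^+$ to $\Delta(\gamma+n\delta)$ factors through $\Delta(\gamma)$, and every homomorphism from $\Delta(\alpha\uparrow\gamma-n\delta)$ into $P^\CJ(\gamma)^+$ factors through $\Delta(\alpha\uparrow\gamma)$ — pin $\ker(P^\CJ(\gamma)^+\to\Delta(\gamma))$ down to $\Delta(\alpha\uparrow\gamma)$, forcing $\ker p=0$.

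The genuinely non-trivial step is this last one: ruling out that $P^\CJ(\gamma)^+$ is strictly larger than $Z(\gamma,\alpha\uparrow\gamma)$, equivalently that the $\CZ^+$-action on $P^\CJ(\gamma)$ fails to reach all of the Verma subquotients $\Delta(\mu)$ with $\mu\notin\{\gamma,\alpha\uparrow\gamma\}$. Everything before it is formal or a bookkeeping computation with affine roots; the control of the imaginary translates $\Delta(\gamma+n\delta)$ and $\Delta(\alpha\uparrow\gamma-n\delta)$ is the essential input, and in particular the hypotheses of Lemmas~\ref{lemma-main1} and~\ref{lemma-main2} — the equality of a $\Hom$-dimension with a Jordan–Hölder multiplicity — have first to be established for subgeneric $\gamma$ (from the multiplicity results recalled above).
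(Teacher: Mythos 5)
Roughly the first two thirds of your argument are sound, and at one point more robust than the paper's own wording: you construct $N'$ with $P^\CJ(\gamma)/N'\cong Z(\gamma,\alpha\uparrow\gamma)$ (the paper gets this quotient more directly as the truncation $P^\CJ(\gamma)^{\le\alpha\uparrow\gamma}$, using Theorem~\ref{thm-BGG}(3)), and you prove $\CZ^+P^\CJ(\gamma)\subseteq N'$ by showing that $Z(\gamma,\alpha\uparrow\gamma)$ is positively restricted, correctly reduced via naturality to $\Hom_{\hCO}(\Delta(\gamma+n\delta),\Delta(\alpha\uparrow\gamma))=0$ for $n>0$. Two caveats: this vanishing really does have to come from the Kac--Kazhdan linkage chains, as you indicate, and not from the partial order alone, so your ``overshoot'' remark needs to be written out as an actual chain argument; and $\alpha\uparrow\gamma$ is \emph{a} minimal, not the unique minimal, flag weight of $N$ (weights such as $\gamma+\delta$ are typically incomparable to it), though minimality is all the $\Ext^1$-rearrangement requires.

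The genuine gap is the final step, which is exactly the core of the proposition. From right-exactness of $(\cdot)^-$ you correctly get that $p^-$ is an isomorphism and $\ker p\subseteq\CZ^-P^\CJ(\gamma)^+$ (for the isomorphism you also need the $\CZ^-$-freeness of Verma modules, as the paper uses right after the proposition, to see that $Z(\gamma,\alpha\uparrow\gamma)^-$ has a two-step restricted Verma flag; the structure of $\rP^\CJ(\gamma)$ by itself does not compute $Z(\gamma,\alpha\uparrow\gamma)^-$). But nothing you say then forces $\ker p=0$: Lemmas~\ref{lemma-main1} and~\ref{lemma-main2} are statements about homomorphisms out of, respectively into, $Z(\lambda,\mu)$, not $P^\CJ(\gamma)^+$ --- the ``into $P^+$'' version would presuppose exactly the two-step structure of $P^+$ you are trying to establish --- and you give no mechanism by which such factorization statements control a submodule of $\CZ^-P^+$; in the paper these lemmas play no role in Proposition~\ref{prop-parresproj} and enter only later, in Lemma~\ref{lemma-main3}. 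What is missing is a weight-space comparison with $P^{\res}$: if $(P^+)_\mu\ne 0$ for some $\mu\not\le\alpha\uparrow\gamma$, choose such a $\mu$ maximal; since $\CZ^-P^+$ is the sum of the images of the module homomorphisms $z^{P^+}$, its $\mu$-weight space is spanned by the images of the spaces $(P^+)_{\mu+n\delta}$, $n>0$, which vanish by maximality, so the weight $\mu$ survives in $(P^+)^-=P^{\res}=\rP^\CJ(\gamma)$, contradicting that all weights of $\rP^\CJ(\gamma)$ are $\le\alpha\uparrow\gamma$ by its two-step restricted Verma flag. Hence all weights of $P^+$ are $\le\alpha\uparrow\gamma$, and since $N'$ is generated by weight vectors whose weights are $\not\le\alpha\uparrow\gamma$ (by the neighbouring property), its image $\ker p$ in $P^+$ is zero. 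Some such argument --- which is the paper's --- has to replace your appeal to Lemmas~\ref{lemma-main1} and~\ref{lemma-main2}.
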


\begin{proof}  Let $P:=P^\CJ(\gamma)$. Then $P^{\varle\alpha\uparrow\gamma}\cong P^{\varle\alpha\uparrow\gamma}(\gamma)$. Then $\gamma$ and $\alpha\uparrow\gamma$ are neighbouring, hence 
$$
\left(P^{\varle\alpha\uparrow\gamma}:\Delta(\gamma)\right)=\left(P^{\varle\alpha\uparrow\gamma}:\Delta(\alpha\uparrow\gamma)\right)=1
$$ 
and all other multiplicities are zero, so we have a short exact sequence
$$
0\to \Delta(\alpha\uparrow\gamma)\to P^{\varle\alpha\uparrow\gamma}\to \Delta(\gamma)\to 0.
$$
This is a non-split short exact sequence, as $\Delta(\alpha\uparrow\gamma)$ is not a quotient of $P^{\varle\alpha\uparrow\gamma}$. So $P^{\varle\alpha\uparrow\gamma}\cong Z(\gamma,\alpha\uparrow\gamma)$. 

Note that the kernel of the homomorphism  $P\to P^{\varle\alpha\uparrow\gamma}$ is generated by all weight spaces $P_\mu$ with $\mu\not\le\alpha\uparrow\lambda$. Now $P$ is generated by its $\gamma$-weight space, so  $\CZ^+ P$ is generated by its weight spaces $(\CZ^+ P)_{\gamma+n\delta}$ for $n>0$. As $\gamma+n\delta\not\le\alpha\uparrow\gamma$ for all $n>0$, we  obtain an induced map $P^+\to P^{\varle\alpha\uparrow\gamma}$. We claim that this map is an isomorphism, which, by the above, implies the statement of the proposition.

Clearly this map is surjective. If it is not injective, then there exists a $\mu$ with $\mu\not\le\alpha\uparrow\gamma$ and $P^+_\mu\ne 0$. Let us in this case choose a maximal such $\mu$. Then we have $(P^{\res})_\mu=((P^+)^-)_\mu\ne 0$, which contradicts the fact that $P^{\res}$ is an extension of $\Delta(\gamma)$ and $\Delta(\alpha\uparrow\gamma)$, so all its weights are $\le\alpha\uparrow\gamma$. 
\end{proof}

For simplicity we will denote $P^\CJ(\gamma)^+$ by $P(\gamma)^+$ in the following.

\subsection{Homomorphisms between projectives}

We will now construct a basis of the homomorphism space $\Hom_{\rCO}(\rP(\lambda),\rP(\mu))$ for $\lambda$ and $\mu$ in a $\alpha$-subgeneric equivalence class. We have already seen that this space is one-dimensional if $\mu\in\{\alpha\downarrow\lambda,\alpha\uparrow\lambda\}$, two-dimensional in case $\lambda=\mu$, and it is the trivial space otherwise. 

To start with, let us fix, for any $\alpha$-subgeneric $\nu$, an inclusion
$$
j_\nu\colon \Delta(\nu)\to\Delta(\alpha\uparrow\nu).
$$
We denote by $\ol j_\nu\colon \rDelta(\nu)\to\rDelta(\alpha\uparrow\nu)$ the homomorphism $j_\nu^{-}$ (which coincides with $j_\nu^{\res}$, as Verma modules are already positively restricted). Note that $\nu\not\le\alpha\uparrow\nu-n\delta$ for any $n>0$, hence $\ol j_\nu$ is non-zero.
Let $\CJ\subset\hfhd$ be open and bounded and suppose that $\gamma$ and $\alpha\uparrow\gamma$ are contained in $\CJ$. We also fix a surjection
$$
\pi_\nu\colon P^\CJ(\nu)\to\Delta(\nu)
$$
and an inclusion
$$
i_{\alpha\uparrow\nu}\colon \Delta(\alpha\uparrow\nu)\to P(\nu)^+.
$$
In particular, we have a short exact sequence
$$
0\to\Delta(\alpha\uparrow\nu)\stackrel{i_{\alpha\uparrow\nu}}\longrightarrow P(\nu)^+\stackrel{\pi_\nu^+}\longrightarrow\Delta(\nu)\to 0.
$$
As the action of $\CZ^-$ on Verma modules is free (see \cite{FG06} and \cite[Theorem 9.5.3]{Fre07}), this induces, after applying the functor $(\cdot)^-$, a short exact sequence
$$
0\to\rDelta(\alpha\uparrow\nu)\stackrel{\ol i_{\alpha\uparrow\nu}}\longrightarrow \rP(\nu)\stackrel{\ol \pi_\nu}\longrightarrow\rDelta(\nu)\to 0.
$$

Now we can find, by projectivity, a homomorphism 
$$
a_\gamma\colon P^\CJ(\gamma)\to P^\CJ(\alpha\uparrow\gamma)
$$ 
such that the diagram 

\centerline{
\xymatrix{
P^\CJ(\gamma)\ar[rr]^{a_{\gamma}}\ar[d]_{\pi_\gamma}&& P^\CJ(\alpha\uparrow\gamma)\ar[d]^{\pi_{\alpha\uparrow\gamma}}\\
\Delta(\gamma)\ar[rr]^{j_{\gamma}}&&\Delta(\alpha\uparrow\gamma).
}
}
\noindent
commutes. Applying the functor $(\cdot)^+$ yields a commuting diagram

\centerline{
\xymatrix{
P(\gamma)^+\ar[rr]^{ a^+_{\gamma}}\ar[d]_{\pi^+_\gamma}&& P(\alpha\uparrow\gamma)^+\ar[d]^{\pi^+_{\alpha\uparrow\gamma}}\\
\Delta(\gamma)\ar[rr]^{ j_{\gamma}}&&\Delta(\alpha\uparrow\gamma).
}
}
\noindent
The following is the crucial technical result of this paper. 
\begin{lemma}\label{lemma-main3} The composition $\Delta(\alpha\uparrow\gamma)\stackrel{i_{\alpha\uparrow\gamma}}\longrightarrow  P(\gamma)^+\stackrel{a^+_\gamma}\to P(\alpha\uparrow\gamma)^+$ is non-zero.
\end{lemma}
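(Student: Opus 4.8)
The statement to prove is that the composite $\Delta(\alpha\uparrow\gamma)\xrightarrow{i_{\alpha\uparrow\gamma}} P(\gamma)^+\xrightarrow{a^+_\gamma} P(\alpha\uparrow\gamma)^+$ is nonzero. The strategy is to suppose it vanishes and derive a contradiction with the one-dimensionality of $\Ext^1_{\hCO}(\Delta(\gamma),\Delta(\alpha\uparrow\gamma))$ from Lemma \ref{lemma-neiext}, using that $P(\gamma)^+\cong Z(\gamma,\alpha\uparrow\gamma)$ and $P(\alpha\uparrow\gamma)^+\cong Z(\alpha\uparrow\gamma,\alpha\uparrow^2\gamma)$ by Proposition \ref{prop-parresproj}.

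\textbf{Key steps.} First I would use the commuting square relating $a^+_\gamma$, $\pi^+_\gamma$, $\pi^+_{\alpha\uparrow\gamma}$ and $j_\gamma$. Since $i_{\alpha\uparrow\gamma}$ is the kernel inclusion of $\pi^+_\gamma$, the composite $\pi^+_{\alpha\uparrow\gamma}\circ a^+_\gamma\circ i_{\alpha\uparrow\gamma}= j_\gamma\circ\pi^+_\gamma\circ i_{\alpha\uparrow\gamma}=0$, so $a^+_\gamma\circ i_{\alpha\uparrow\gamma}$ indeed lands in $\ker\pi^+_{\alpha\uparrow\gamma}=\im i_{\alpha\uparrow^2\gamma}\cong\Delta(\alpha\uparrow^2\gamma)$. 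Thus $a^+_\gamma\circ i_{\alpha\uparrow\gamma}$ is a map $\Delta(\alpha\uparrow\gamma)\to\Delta(\alpha\uparrow^2\gamma)$, and asserting it is zero is asserting that $a^+_\gamma$ kills the submodule $\Delta(\alpha\uparrow\gamma)\subset P(\gamma)^+$. In that case $a^+_\gamma$ factors through the quotient $P(\gamma)^+/\Delta(\alpha\uparrow\gamma)\cong\Delta(\gamma)$, giving a map $b\colon\Delta(\gamma)\to P(\alpha\uparrow\gamma)^+$ with $\pi^+_{\alpha\uparrow\gamma}\circ b= j_\gamma$. Now pull back the short exact sequence $0\to\Delta(\alpha\uparrow^2\gamma)\to P(\alpha\uparrow\gamma)^+\xrightarrow{\pi^+_{\alpha\uparrow\gamma}}\Delta(\alpha\uparrow\gamma)\to 0$ along $j_\gamma\colon\Delta(\gamma)\to\Delta(\alpha\uparrow\gamma)$: the existence of the lift $b$ means this pullback splits, i.e. the class in $\Ext^1_{\hCO}(\Delta(\gamma),\Delta(\alpha\uparrow^2\gamma))$ obtained by pulling back $[Z(\alpha\uparrow\gamma,\alpha\uparrow^2\gamma)]$ along $j_\gamma$ is zero. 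The contradiction I aim for is that this pullback class is in fact \emph{nonzero}. To see this I would push the class $[P(\gamma)^+]=[Z(\gamma,\alpha\uparrow\gamma)]\in\Ext^1(\Delta(\gamma),\Delta(\alpha\uparrow\gamma))$ forward along $\bar j$-type maps; more precisely, the composite extension built from $0\to\Delta(\alpha\uparrow^2\gamma)\to P(\alpha\uparrow\gamma)^+\to\Delta(\alpha\uparrow\gamma)\to 0$ pulled back along $j_\gamma$ is the Yoneda product, and I would identify it with a nonzero element of the one-dimensional space $\Ext^1_{\hCO^{\le\alpha\uparrow^2\gamma}}(\Delta(\gamma),\Delta(\alpha\uparrow^2\gamma))$ by computing with the projective resolution $0\to\Delta(\alpha\uparrow\gamma)\to P^{\le\alpha\uparrow^2\gamma}(\gamma)\to\Delta(\gamma)\to 0$ of Lemma \ref{lemma-neiext} and tracking that $a_\gamma$ restricted to the appropriate truncation is, up to scalar, the connecting map in this resolution composed with $i_{\alpha\uparrow\gamma}$.

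\textbf{Alternative and cleaner route.} Rather than chasing Ext classes, I expect the slickest argument uses the tilting equivalence $t$ of Theorem \ref{thm-tilting} together with Lemma \ref{lemma-main2}. Applying $t$ to $P(\gamma)^+\cong Z(\gamma,\alpha\uparrow\gamma)$ and $P(\alpha\uparrow\gamma)^+\cong Z(\alpha\uparrow\gamma,\alpha\uparrow^2\gamma)$ converts the claimed composite into a statement about factorizations of homomorphisms $\Delta(\mu-n\delta)\to Z(\lambda,\mu)$, and the subgeneric multiplicity formula from \cite{AF08} guarantees the hypothesis of Lemma \ref{lemma-main2} (the relevant $\Hom$ and Jordan--H\"older multiplicities agree in this $A_1$ situation, since the restricted blocks are essentially controlled by the affine $\fsl_2$ combinatorics and all the spaces in question are at most one-dimensional). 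So I would: (i) translate ``$a^+_\gamma\circ i_{\alpha\uparrow\gamma}=0$'' via $t$ into ``a certain nonzero map $\Delta(\alpha\uparrow^2\gamma+n\delta)\to tZ(\alpha\uparrow\gamma,\alpha\uparrow^2\gamma)$ factors through $\Delta(\alpha\uparrow^2\gamma)$''; (ii) invoke Lemma \ref{lemma-main2} to see this is automatic for \emph{every} map, hence carries no information — which is the wrong direction, so in fact the correct contradiction is that $a^+_\gamma\circ i_{\alpha\uparrow\gamma}$ being zero would force $a^+_\gamma$ itself to be zero modulo the lower Verma, contradicting that $a^+_\gamma$ covers the nonzero $j_\gamma$ and that the extension $P(\gamma)^+$ is nonsplit.

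\textbf{Main obstacle.} The crux is step (i)--(ii): pinning down exactly which one-dimensional $\Ext$ or $\Hom$ space the obstruction lives in and verifying it is nonzero. The delicate point is that $a_\gamma$ is only defined up to the ambiguity of projective lifts, so I must argue that \emph{no} choice of $a_\gamma$ can make $a^+_\gamma\circ i_{\alpha\uparrow\gamma}$ vanish; equivalently, the induced map on $\Ext^1$'s is injective. I expect this injectivity to follow from Theorem \ref{thm-BGG}(4) applied to $P^\CJ(\gamma)$ — the space $\Hom_{\hCO}(P^\CJ(\gamma),\Delta(\alpha\uparrow\gamma+n\delta))$ has dimension $[\Delta(\alpha\uparrow\gamma+n\delta):L(\gamma)]$, and the subgeneric multiplicity formula forces this to be exactly the dimension needed for the factorization in Lemma \ref{lemma-main1}/\ref{lemma-main2} to be an \emph{isomorphism} rather than merely an injection — so every homomorphism $Z(\gamma,\alpha\uparrow\gamma)\to\Delta(\alpha\uparrow\gamma+n\delta)$, and dually every homomorphism into $Z$, is accounted for, which rigidifies $a_\gamma$ enough to conclude. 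Making this last count precise, and choosing the correct $n$ (here $n=0$ suffices since $\alpha\uparrow^2\gamma = \alpha\uparrow\gamma + m\alpha$ for the appropriate $m$, not a $\delta$-shift), is the part that needs care.
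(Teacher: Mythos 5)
Your opening reduction is exactly the paper's: if the composite vanished, then $a^+_\gamma$ would factor through $\pi^+_\gamma$, giving a map $\Delta(\gamma)\to P(\alpha\uparrow\gamma)^+\cong Z(\alpha\uparrow\gamma,\alpha\uparrow^2\gamma)$ covering $j_\gamma$; and since the contradiction only uses that $a^+_\gamma$ covers $j_\gamma$, your worry about the non-uniqueness of the projective lift $a_\gamma$ takes care of itself. The gap is in the decisive step, namely \emph{why} any map $\Delta(\gamma)\to Z(\alpha\uparrow\gamma,\alpha\uparrow^2\gamma)$ must land in the Verma submodule $\Delta(\alpha\uparrow^2\gamma)$ (hence cannot cover $j_\gamma\ne 0$). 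Your Ext-class route does not establish this: the sequence $0\to\Delta(\alpha\uparrow\gamma)\to P^{\le\alpha\uparrow^2\gamma}(\gamma)\to\Delta(\gamma)\to 0$ you want to resolve with is not exact, because by BGGH-reciprocity $P^{\le\alpha\uparrow^2\gamma}(\gamma)$ also contains $\Delta(\alpha\uparrow^2\gamma)$ with multiplicity $[\Delta(\alpha\uparrow^2\gamma):L(\gamma)]$, which is nonzero at the critical level (indeed that nonvanishing is what makes the lemma nontrivial); the genuine two-step resolution of Lemma \ref{lemma-neiext} lives in the truncation $\le\alpha\uparrow\gamma$, where $\Delta(\alpha\uparrow^2\gamma)$ is invisible. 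Moreover $\gamma$ and $\alpha\uparrow^2\gamma$ are \emph{not} neighbouring (they differ by an imaginary root), so no one-dimensionality of $\Ext^1_{\hCO}(\Delta(\gamma),\Delta(\alpha\uparrow^2\gamma))$ is available from the paper's lemmas, nor is it needed.

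Your second route names the right tool but misapplies it. Lemma \ref{lemma-main2} is already the tilting-twisted statement (it was obtained from Lemma \ref{lemma-main1} by applying $t$), so no further translation by $t$ is called for; and it is not ``contentless'': applied with $\lambda=\alpha\uparrow\gamma$, $\mu=\alpha\uparrow^2\gamma$ it says precisely that every homomorphism $\Delta(\mu-n\delta)\to Z(\lambda,\mu)$ factors through $\Delta(\mu)$. The point you get wrong is the identification of the source: $\gamma=\alpha\uparrow^2\gamma-n\delta$ with $n>0$ (applying $\alpha\uparrow$ twice \emph{is} a positive $\delta$-shift of $\gamma$, since $s_{\alpha,0}s_{\alpha,-1}$ is a translation), so the factored map $\Delta(\gamma)\to Z(\alpha\uparrow\gamma,\alpha\uparrow^2\gamma)$ is exactly of the shape covered by the lemma; your claim that ``$n=0$ suffices'' because $\alpha\uparrow^2\gamma$ differs from $\alpha\uparrow\gamma$ by a multiple of $\alpha$ compares the wrong pair of weights and renders the lemma vacuous. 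Finally, the hypothesis of Lemma \ref{lemma-main2} is not the restricted multiplicity formula, nor an ``everything is one-dimensional'' statement (the multiplicities $[\Delta(\nu):L(\nu-n\delta)]$ are in general larger than one); it is the separate result of \cite{AF08} that $\dim_\DC\Hom_{\hCO}(\Delta(\nu-n\delta),\Delta(\nu))=[\Delta(\nu):L(\nu-n\delta)]$ for subgeneric $\nu$, applied after observing that $-2\rho-\alpha\uparrow^2\gamma$ is again $\alpha$-subgeneric. With these corrections your argument becomes the paper's proof; as written, the crucial factorization is not justified.
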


\begin{proof} Suppose the composition were zero. Then we could factor the map $a^+_\gamma$ over a homomorphism $\Delta(\gamma)\to P(\alpha\uparrow\gamma)^+$. By Proposition  \ref{prop-parresproj}, $P(\alpha\uparrow\gamma)^+\cong Z(\alpha\uparrow\gamma, \alpha\uparrow^2\gamma)$. 

Note that for any $\alpha$-subgeneric $\nu$, the weights $\nu$ and $\alpha\uparrow\nu$ are neighbouring. Moreover, with $\nu$ also $-2\rho-\nu$ is $\alpha$-subgeneric. Finally, in \cite{AF08} it is shown that for subgeneric $\nu$ we have
$$
\dim_\DC\Hom_{\hCO}(\Delta(\nu-n\delta),\Delta(\nu))=[ \Delta(\nu):L(\nu-n\delta)]
$$
for all $n\in\DZ$. Finally, $\alpha\uparrow^2\nu=\nu+n\delta$ for some $n>0$. This allows us to  apply Lemma \ref{lemma-main2} and we conclude  that the map $a^+_\gamma$  now would factor over a homomorphism $\Delta(\gamma)\to\Delta(\alpha\uparrow^2\gamma)\to P(\alpha\uparrow\gamma)^+$. But this contradicts its construction. 
\end{proof}

Now apply the restriction functor $(\cdot)^{\res}$ to $a_\gamma$. We obtain a homomorphism $\ol a_\gamma\colon \rP(\gamma)\to \rP(\alpha\uparrow\gamma)$ such that the diagram

\centerline{
\xymatrix{
\rP(\gamma)\ar[rr]^{ \ol a_{\gamma}}\ar[d]_{\ol\pi_\gamma}&& \rP(\alpha\uparrow\gamma)\ar[d]^{\ol\pi_{\alpha\uparrow\gamma}}\\
\rDelta(\gamma)\ar[rr]^{\ol j_{\gamma}}&&\rDelta(\alpha\uparrow\gamma).
}
}
\noindent
commutes. From Lemma \ref{lemma-main3} (and some weight considerations) we conclude:
\begin{lemma}\label{lemma-main4} The composition
$$
\rDelta(\alpha\uparrow\gamma)\stackrel{\ol i_{\alpha\uparrow\gamma}}\longrightarrow \rP(\gamma)\stackrel{\ol a_\gamma}\longrightarrow \rP(\alpha\uparrow\gamma)
$$
is non-zero.
\end{lemma}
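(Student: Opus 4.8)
The statement we want is that the composition $\rDelta(\alpha\uparrow\gamma)\stackrel{\ol i_{\alpha\uparrow\gamma}}\longrightarrow \rP(\gamma)\stackrel{\ol a_\gamma}\longrightarrow \rP(\alpha\uparrow\gamma)$ is non-zero, and the obvious strategy is to deduce it from Lemma \ref{lemma-main3} by keeping track of the (partial) restriction functors and weight spaces. First I would recall that $\rP(\gamma)=(P(\gamma)^+)^-$ and that $\ol i_{\alpha\uparrow\gamma}$, $\ol a_\gamma$ were constructed precisely by applying $(\cdot)^-$ to $i_{\alpha\uparrow\gamma}$ and $(\cdot)^{\res}$ to $a_\gamma$ (and $a^+_\gamma$ is $a_\gamma^+$). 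Since $(\cdot)^-$ is a functor, the composition $\ol a_\gamma\circ\ol i_{\alpha\uparrow\gamma}$ is literally $(a^+_\gamma\circ i_{\alpha\uparrow\gamma})^-$, the image under $(\cdot)^-$ of the map shown to be non-zero in Lemma \ref{lemma-main3}. So the whole content is: if a non-zero homomorphism $f\colon \Delta(\alpha\uparrow\gamma)\to P(\alpha\uparrow\gamma)^+$ stays non-zero after applying $(\cdot)^-$, we are done.

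Next I would pin down the image of $f=a^+_\gamma\circ i_{\alpha\uparrow\gamma}$. By Proposition \ref{prop-parresproj}, $P(\alpha\uparrow\gamma)^+\cong Z(\alpha\uparrow\gamma,\alpha\uparrow^2\gamma)$, which sits in a non-split sequence $0\to\Delta(\alpha\uparrow^2\gamma)\to Z(\alpha\uparrow\gamma,\alpha\uparrow^2\gamma)\to\Delta(\alpha\uparrow\gamma)\to 0$. A non-zero map from $\Delta(\alpha\uparrow\gamma)$ into this object either maps onto the top quotient $\Delta(\alpha\uparrow\gamma)$ (equivalently, composes with $\pi^+_{\alpha\uparrow\gamma}$ to something non-zero) or factors through the submodule $\Delta(\alpha\uparrow^2\gamma)$. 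The second case is exactly the one ruled out at the end of the proof of Lemma \ref{lemma-main3} via Lemma \ref{lemma-main2}: there $f$ would factor through $\Delta(\gamma)\to\Delta(\alpha\uparrow^2\gamma)$, contradicting the commuting square defining $a^+_\gamma$ (which forces $\pi^+_{\alpha\uparrow\gamma}\circ f = j_\gamma\circ(\text{something hitting }\Delta(\gamma)\hookrightarrow\Delta(\alpha\uparrow\gamma))$, in particular non-zero). So in fact the proof of Lemma \ref{lemma-main3} already shows the stronger statement that $\pi^+_{\alpha\uparrow\gamma}\circ f\ne 0$, i.e.\ the composite $\Delta(\alpha\uparrow\gamma)\to P(\alpha\uparrow\gamma)^+\to \Delta(\alpha\uparrow\gamma)$ is non-zero — hence, being an endomorphism of a Verma module composed appropriately, it is a non-zero scalar times a non-zero map $\Delta(\alpha\uparrow\gamma)\to\Delta(\alpha\uparrow\gamma)$. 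I would then apply $(\cdot)^-$: since $\pi^+_{\alpha\uparrow\gamma}{}^- = \ol\pi_{\alpha\uparrow\gamma}$ and $(\Delta(\alpha\uparrow\gamma))^- = \rDelta(\alpha\uparrow\gamma)$, we get $\ol\pi_{\alpha\uparrow\gamma}\circ\ol a_\gamma\circ\ol i_{\alpha\uparrow\gamma}\ne 0$ as soon as applying $(\cdot)^-$ does not kill this particular non-zero endomorphism of $\Delta(\alpha\uparrow\gamma)$.

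That last point — that $(\cdot)^-$ does not kill the map in question — is the ``weight considerations'' alluded to, and it is the step I expect to require the most care. The map $\pi^+_{\alpha\uparrow\gamma}\circ f\colon\Delta(\alpha\uparrow\gamma)\to\Delta(\alpha\uparrow\gamma)$, being non-zero, is injective with image generated in some weight $\alpha\uparrow\gamma - m\delta$; applying $(\cdot)^-$ quotients the source by $\CZ^-\Delta(\alpha\uparrow\gamma)$, whose weights are all of the form $\alpha\uparrow\gamma - k\delta$ with $k>0$. I would argue that the image of $f$ meets the highest weight space $\Delta(\alpha\uparrow\gamma)_{\alpha\uparrow\gamma}$ non-trivially — this is where one uses that $f=a^+_\gamma\circ i_{\alpha\uparrow\gamma}$ and that $i_{\alpha\uparrow\gamma}$ is an \emph{inclusion} of $\Delta(\alpha\uparrow\gamma)$ as a submodule, together with the commuting square, so that the composite $\Delta(\alpha\uparrow\gamma)\to\Delta(\alpha\uparrow\gamma)$ is in fact $\pm\id$ up to the identification — and hence survives in $\rDelta(\alpha\uparrow\gamma)$, whose $\alpha\uparrow\gamma$-weight space is one-dimensional and non-zero. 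Concretely: the highest weight vector of $\Delta(\alpha\uparrow\gamma)$ is not in $\CZ^-\Delta(\alpha\uparrow\gamma)$ (that submodule lives strictly below $\alpha\uparrow\gamma$), so any map whose image contains it remains non-zero after $(\cdot)^-$. Spelling this out — identifying the relevant weight spaces of $P(\gamma)^+$, $P(\alpha\uparrow\gamma)^+$ and the Verma modules, and checking the composite is genuinely an automorphism on the top weight space rather than merely non-zero — is the main obstacle; everything else is formal functoriality of $(\cdot)^-$ applied to Lemma \ref{lemma-main3}.
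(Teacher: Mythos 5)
There is a genuine error at the heart of your argument. You claim that the commuting square for $a^+_\gamma$ forces $\pi^+_{\alpha\uparrow\gamma}\circ f\ne 0$ for $f=a^+_\gamma\circ i_{\alpha\uparrow\gamma}$, so that $f$ hits the top quotient $\Delta(\alpha\uparrow\gamma)$ of $P(\alpha\uparrow\gamma)^+$ and is essentially the identity on the highest weight space. The square gives exactly the opposite: $\pi^+_{\alpha\uparrow\gamma}\circ a^+_\gamma=j_\gamma\circ\pi^+_\gamma$, and since $i_{\alpha\uparrow\gamma}$ is the inclusion of the kernel of $\pi^+_\gamma$ in the short exact sequence $0\to\Delta(\alpha\uparrow\gamma)\to P(\gamma)^+\to\Delta(\gamma)\to 0$, we get $\pi^+_{\alpha\uparrow\gamma}\circ f=j_\gamma\circ\pi^+_\gamma\circ i_{\alpha\uparrow\gamma}=0$. (This is also forced abstractly: a non-zero composite $\Delta(\alpha\uparrow\gamma)\to Z(\alpha\uparrow\gamma,\alpha\uparrow^2\gamma)\to\Delta(\alpha\uparrow\gamma)$ would be a non-zero scalar and would split the non-split extension.) So $f$ lands in the submodule $\Delta(\alpha\uparrow^2\gamma)\subset P(\alpha\uparrow\gamma)^+$ -- precisely the case you declared ruled out; what the proof of Lemma \ref{lemma-main3} excludes is a factorization of the \emph{whole} map $a^+_\gamma$ through $\Delta(\gamma)\to\Delta(\alpha\uparrow^2\gamma)$, which is a different statement. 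Consequently your ``weight consideration'' (the image contains the highest weight vector of the quotient Verma, hence survives $(\cdot)^-$) has nothing to apply to, and the proof collapses at the decisive step.

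The repair runs through the submodule instead of the quotient. Since $f\ne 0$ by Lemma \ref{lemma-main3} and $\pi^+_{\alpha\uparrow\gamma}\circ f=0$, write $f=i_{\alpha\uparrow^2\gamma}\circ g$ with $g\colon\Delta(\alpha\uparrow\gamma)\to\Delta(\alpha\uparrow^2\gamma)$ non-zero. Your functoriality remark is fine: $\ol a_\gamma\circ\ol i_{\alpha\uparrow\gamma}=f^-=\ol i_{\alpha\uparrow^2\gamma}\circ g^-$. Because the $\CZ^-$-action on Verma modules is free, applying $(\cdot)^-$ to $0\to\Delta(\alpha\uparrow^2\gamma)\to P(\alpha\uparrow\gamma)^+\to\Delta(\alpha\uparrow\gamma)\to 0$ keeps $\ol i_{\alpha\uparrow^2\gamma}$ injective, so it suffices to see $g^-\ne 0$. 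For that one needs that the image of the highest weight vector, of weight $\alpha\uparrow\gamma$, does not lie in $\CZ^-\Delta(\alpha\uparrow^2\gamma)$, whose weights are all $\le\alpha\uparrow^2\gamma-m\delta$ with $m>0$; this is exactly the statement $\nu\not\le\alpha\uparrow\nu-m\delta$ (with $\nu=\alpha\uparrow\gamma$) that the paper already invoked to show $\ol j_\nu\ne 0$, and it is the ``weight consideration'' the paper alludes to. So the conclusion is correct, but not for the reason you give; you must route the argument through $\Delta(\alpha\uparrow^2\gamma)$, not through the top quotient.
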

 In particular, $\ol a_\gamma$ is non-zero, hence  a generator of $\Hom_{\rCO}(\rP(\gamma),\rP(\alpha\uparrow\gamma))$.

Let $b_\gamma\colon\rP(\gamma)\to\rP(\alpha\downarrow\gamma)$ be the following composition:
$$
b_\gamma\colon \rP(\gamma)\stackrel{\ol\pi_\gamma}\longrightarrow\rDelta(\gamma)\stackrel{\ol i_\gamma}\longrightarrow\rP(\alpha\downarrow\gamma),
$$
This composition is clearly non-zero, hence $b_\gamma$ is a basis of $\Hom_{\rCO}(\rP(\gamma), \rP(\alpha\downarrow\gamma))$.

Finally, let $\ol n_\gamma\colon \rP(\gamma)\to\rP(\gamma)$ be the composition
$$
\ol n_\gamma\colon \rP(\gamma)\stackrel{\ol\pi_\gamma}\longrightarrow\rDelta(\gamma)\stackrel{\ol j_\gamma}\longrightarrow\rDelta(\alpha\uparrow\gamma)\stackrel{\ol i_{\alpha\uparrow\gamma}}\longrightarrow \rP(\gamma).
$$
Again, this is non-zero and obviously not invertible (we even have $\ol n_\gamma^2=0$), so $\{n_\gamma,\id\}$ is a basis of $\End_{\rCO}(\rP(\gamma))$.

We have now exhibited a basis for any non-zero space $\Hom_{\rCO}(\rP(\gamma),\rP(\mu))$. The following theorem describes all possible (non-trivial) compositions, hence gives a full description of the subgeneric endomorphism algebra $\End_\rCO(\bigoplus_{\gamma\in\Gamma}\rP(\gamma))$, where $\Gamma$ is the $\rsim$-equivalence class of $\gamma$.

\begin{theorem} Let $\gamma\in\hfhd_{\crit}$ be $\alpha$-subgeneric. Then we have the following relations:
\begin{enumerate} 
\item $b_{\alpha\uparrow\gamma}\circ a_\gamma$ and $a_{\alpha\downarrow\gamma}\circ b_\gamma$ are non-zero scalar multiples of $n_\gamma$. 
\item $a_{\alpha\uparrow\gamma}\circ a_\gamma=0$ and $b_{\alpha\downarrow\gamma}\circ b_\gamma=0$.
\item $n_{\alpha\uparrow\gamma}\circ a_\gamma=0$ and $n_{\alpha\downarrow\gamma}\circ b_\gamma=0$. 
\item $n_\gamma\circ n_\gamma=0$.
\end{enumerate}
\end{theorem}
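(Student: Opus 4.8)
The plan is to verify each relation by unwinding the definitions of the basis morphisms $a_\gamma$, $b_\gamma$, $n_\gamma$ and tracking their effect on the restricted Verma flags of the projectives $\rP(\nu)$. Throughout I will use the short exact sequences
$$
0\to\rDelta(\alpha\uparrow\nu)\stackrel{\ol i_{\alpha\uparrow\nu}}\longrightarrow \rP(\nu)\stackrel{\ol \pi_\nu}\longrightarrow\rDelta(\nu)\to 0
$$
together with the facts that $\Hom_{\rCO}(\rDelta(\lambda),\rDelta(\mu))$ is at most one-dimensional, vanishes unless $\mu\in\{\lambda,\alpha\uparrow\lambda\}$, and that $\ol\pi_\nu\circ\ol i_{\alpha\uparrow\nu}=0$. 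The dimension counts from the BGGH-reciprocity computation of $\dim_\DC\Hom_{\rCO}(\rP(\gamma),\rP(\mu))$ will be used repeatedly to conclude that a morphism is a scalar multiple of the relevant basis element as soon as it is shown nonzero.

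First I would treat (4): by definition $\ol n_\gamma=\ol i_{\alpha\uparrow\gamma}\circ\ol j_\gamma\circ\ol\pi_\gamma$, so $\ol n_\gamma\circ\ol n_\gamma$ contains the factor $\ol\pi_\gamma\circ\ol i_{\alpha\uparrow\gamma}=0$; this is immediate (and was already observed in the text). For (2), consider $a_{\alpha\uparrow\gamma}\circ a_\gamma\colon \rP(\gamma)\to\rP(\alpha\uparrow^2\gamma)$; since $\mu=\alpha\uparrow^2\gamma\notin\{\alpha\downarrow\gamma,\gamma,\alpha\uparrow\gamma\}$, the target Hom-space is zero, so the composition vanishes. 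The same argument kills $b_{\alpha\downarrow\gamma}\circ b_\gamma$ and also gives (3): $n_{\alpha\uparrow\gamma}\circ a_\gamma$ and $n_{\alpha\downarrow\gamma}\circ b_\gamma$ both land in $\Hom_{\rCO}(\rP(\gamma),\rP(\nu))$ with $\nu\in\{\alpha\uparrow\gamma,\alpha\downarrow\gamma\}$, a one-dimensional space, so I must instead argue these particular compositions are zero rather than merely scalar. For $n_{\alpha\uparrow\gamma}\circ a_\gamma$: by the commuting square defining $\ol a_\gamma$ we have $\ol\pi_{\alpha\uparrow\gamma}\circ\ol a_\gamma=\ol j_\gamma\circ\ol\pi_\gamma$, and since $\ol n_{\alpha\uparrow\gamma}=\ol i_{\alpha\uparrow^2\gamma}\circ\ol j_{\alpha\uparrow\gamma}\circ\ol\pi_{\alpha\uparrow\gamma}$, the composition factors as $\ol i_{\alpha\uparrow^2\gamma}\circ\ol j_{\alpha\uparrow\gamma}\circ(\ol j_\gamma\circ\ol\pi_\gamma)$; but $\ol j_{\alpha\uparrow\gamma}\circ\ol j_\gamma\colon\rDelta(\gamma)\to\rDelta(\alpha\uparrow^2\gamma)$ lies in $\Hom_{\rCO}(\rDelta(\gamma),\rDelta(\alpha\uparrow^2\gamma))=0$, hence the composition vanishes. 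For $n_{\alpha\downarrow\gamma}\circ b_\gamma$ one argues symmetrically, writing $b_\gamma=\ol i_\gamma\circ\ol\pi_\gamma$ and using that $\ol\pi_{\alpha\downarrow\gamma}\circ\ol i_\gamma=0$ forces a factor of zero in $\ol n_{\alpha\downarrow\gamma}\circ b_\gamma=\ol i_{\alpha\uparrow(\alpha\downarrow\gamma)}\circ\ol j_{\alpha\downarrow\gamma}\circ\ol\pi_{\alpha\downarrow\gamma}\circ\ol i_\gamma\circ\ol\pi_\gamma$.

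The main obstacle is (1), since there I must show that two a priori unrelated compositions are nonzero and hence, by the one-dimensionality of $\End_{\rCO}(\rP(\gamma))$ modulo the identity, are both scalar multiples of $n_\gamma$; the scalar being nonzero is the real content. For $a_{\alpha\downarrow\gamma}\circ b_\gamma$ this should be manageable: $b_\gamma=\ol i_\gamma\circ\ol\pi_\gamma$, and $a_{\alpha\downarrow\gamma}\circ\ol i_\gamma\colon\rDelta(\gamma)\to\rP(\gamma)$ is, up to the commuting square for $\ol a_{\alpha\downarrow\gamma}$, exactly the composition $\ol i_{\alpha\uparrow(\alpha\downarrow\gamma)}\circ(\text{generator }\rDelta(\alpha\downarrow\gamma)\hookrightarrow\ldots)$, and Lemma \ref{lemma-main4} (applied with $\gamma$ replaced by $\alpha\downarrow\gamma$) says precisely that $\ol a_{\alpha\downarrow\gamma}\circ\ol i_\gamma\ne 0$; composing with $\ol\pi_\gamma$ still gives something landing in the submodule $\rDelta(\alpha\uparrow\gamma)\subset\rP(\gamma)$, which is visibly $\ol i_{\alpha\uparrow\gamma}\circ\ol j_\gamma\circ\ol\pi_\gamma=\ol n_\gamma$ up to nonzero scalar. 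For $b_{\alpha\uparrow\gamma}\circ a_\gamma$: here $b_{\alpha\uparrow\gamma}=\ol i_{\alpha\uparrow\gamma}\circ\ol\pi_{\alpha\uparrow\gamma}$, so $b_{\alpha\uparrow\gamma}\circ\ol a_\gamma=\ol i_{\alpha\uparrow\gamma}\circ\ol\pi_{\alpha\uparrow\gamma}\circ\ol a_\gamma=\ol i_{\alpha\uparrow\gamma}\circ\ol j_\gamma\circ\ol\pi_\gamma=\ol n_\gamma$, using the commuting square for $\ol a_\gamma$ — so in fact $b_{\alpha\uparrow\gamma}\circ a_\gamma=\ol n_\gamma$ exactly, and it remains only to check it is nonzero, which follows from $\ol j_\gamma\ne 0$, $\ol\pi_\gamma$ surjective, and $\ol i_{\alpha\uparrow\gamma}$ injective. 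I expect the delicate point to be confirming that in the $a_{\alpha\downarrow\gamma}\circ b_\gamma$ case the image genuinely lands in (and generates) the socle copy $\rDelta(\alpha\uparrow\gamma)$ rather than projecting to the $\rDelta(\gamma)$ quotient; this requires the weight argument that $\ol\pi_\gamma$ of anything factoring through $\ol i_\gamma\colon\rDelta(\gamma)\hookrightarrow\rP(\alpha\downarrow\gamma)$ followed by $\ol a_{\alpha\downarrow\gamma}$ must vanish, which in turn uses $\ol\pi_\gamma\circ\ol a_{\alpha\downarrow\gamma}=\ol j_{\alpha\downarrow\gamma}\circ\ol\pi_{\alpha\downarrow\gamma}$ and $\ol\pi_{\alpha\downarrow\gamma}\circ\ol i_\gamma=0$. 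Assembling these gives all four relations.
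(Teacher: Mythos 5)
Your proposal is correct and follows essentially the same route as the paper: (2) comes from the vanishing of $\Hom_{\rCO}(\rP(\gamma),\rP(\alpha\uparrow^2\gamma))$ and $\Hom_{\rCO}(\rP(\gamma),\rP(\alpha\downarrow^2\gamma))$, and (1) comes from computing $b_{\alpha\uparrow\gamma}\circ \ol a_\gamma=\ol i_{\alpha\uparrow\gamma}\circ\ol j_\gamma\circ\ol\pi_\gamma=\ol n_\gamma$ via the commuting square, and from Lemma \ref{lemma-main4} applied at $\alpha\downarrow\gamma$ together with the two-dimensionality of $\End_{\rCO}(\rP(\gamma))$ for $a_{\alpha\downarrow\gamma}\circ b_\gamma$. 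The only (harmless) differences are that you verify (3) and (4) directly from the explicit factorizations (using $\ol j_{\alpha\uparrow\gamma}\circ\ol j_\gamma=0$ and $\ol\pi_\nu\circ\ol i_{\alpha\uparrow\nu}=0$) instead of deducing them formally from (1) and (2) as the paper does, and that you spell out why $a_{\alpha\downarrow\gamma}\circ b_\gamma$ has no identity component (image contained in $\ker\ol\pi_\gamma$), a point the paper compresses into ``clearly not an automorphism''.
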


\begin{proof} Note that (2) is obvious, as the  homomorphism spaces in  question vanish. Then (3) and (4) follow immediately from (1) and (2). So we are left to prove (1). Note that both compositions are clearly not automorphisms of $\rP(\gamma)$, so we only have to prove that they are non-zero.
From the construction it immediately follows that $b_{\alpha\uparrow\gamma}\circ a_\gamma\ne 0$. That $a_{\alpha\uparrow\gamma}\circ b_\gamma$ is non-zero follows from Lemma \ref{lemma-main4}.
\end{proof}

Hence we see that the endomorphism algebra of $\bigoplus_{\gamma\in\Gamma} \rP(\gamma)$ is given by the following infinite quiver

\centerline{
\xymatrix{
\dots&\bullet\ar@/^/[rr]^{a_{\alpha\downarrow\gamma}}&& \bullet \ar@/^/[ll]^{b_\gamma}\ar@/^/[rr]^{a_{\gamma}}&& \bullet \ar@/^/[ll]^{b_{\alpha\uparrow\gamma}}\ar@/^/[rr]^{a_{\alpha\uparrow\gamma}}&& \bullet \ar@/^/[ll]^{b_{\alpha\uparrow^2\gamma}}& \dots
}
}
 \noindent
with relations $a_{\alpha\downarrow\gamma}\circ b_{\gamma}=b_{\alpha\uparrow\gamma}\circ a_{\gamma}$ and $a_{\alpha\uparrow\gamma}\circ a_\gamma=0$ and $b_{\gamma}\circ b_{\alpha\uparrow\gamma}=0$ for all $\gamma\in\Gamma$.

\end{document}